\documentclass[12pt]{amsart}
\usepackage[latin1]{inputenc}
\usepackage[english]{babel}
\usepackage{amsmath}
\usepackage{amsthm}
\usepackage{amssymb}
\usepackage{amsmath, amsthm, amsfonts, mathrsfs, amsfonts}
\usepackage{latexsym}
\usepackage{textcomp}
\usepackage{enumerate}
\usepackage{ulem}

\usepackage{xcolor}

\theoremstyle{definition}
\newtheorem{definition}{Definition}[section]

\theoremstyle{remark}
\newtheorem{remark}[definition]{Remark}

\theoremstyle{plain}

\theoremstyle{plain}
\newtheorem{cor}[definition]{Corollary}

\theoremstyle{plain}
\newtheorem{conj}{Conjecture}

\theoremstyle{plain}
\newtheorem{lemma}[definition]{Lemma}

\theoremstyle{plain}
\newtheorem{theorem}[definition]{Theorem}

\newtheorem{theoremA}[]{Theorem}

\theoremstyle{remark}

\theoremstyle{remark}
\newtheorem{notation}[definition]{Notation}

\theoremstyle{definition}

\newcommand{\Syl}{\mathrm{Syl}}

\newcommand{\Sym}{\mathrm{Sym}}

\newcommand{\norm}{\mathrel{\unlhd}}

\def \Z {\mathrm {Z}}

\def \Syl {\hbox {\rm Syl}}

\oddsidemargin 0.1in \evensidemargin 0.1in \textwidth=6.1in
\textheight=8.5in \itemsep=0in
\parsep=0.1in

\title[Nilpotency and non-commuting graphs]{A conjecture related to the nilpotency of groups with isomorphic non-commuting graphs}

\author[V. Grazian]{Valentina Grazian}\address[V. Grazian]{
Department of Mathematics and Applications\\
University of Milano -- Bicocca\\
Via Roberto Cozzi 55, 20125 Milano \\ Italy} \email{valentina.grazian@unimib.it}

\author[C. Monetta]{Carmine Monetta}\address[C. Monetta]{Department of Mathematics \\ University of Salerno \\
via Giovanni Paolo II 132, 84084 Fisciano (SA)\\ Italy}
\email[Corresponding author]{cmonetta@unisa.it}

\keywords{Non-commuting graph; graph isomorphism; groups with abelian centralizers; nilpotent group}
\subjclass[2020]{05C25, 20D15, 20D60}
\begin{document}

\maketitle

\begin{abstract} In this work we discuss whether the non-commuting graph of a finite group can determine its nilpotency. More precisely, Abdollahi, Akbari and Maimani conjectured that if $G$ and $H$ are finite groups with isomorphic non-commuting graphs and $G$ is nilpotent, then $H$ must be nilpotent as well (Conjecture \ref{conjNilp}). 
We characterize the structure of such an $H$ when $G$ is a finite AC-group, that is, a finite group in which all centralizers of non-central elements are abelian. As an application, we prove Conjecture \ref{conjNilp} for finite AC-groups whenever $|Z(G)|\geq|Z(H)|$.
\end{abstract}

\section{Introduction}
Given a finite group $G$, one can consider the graph whose vertices are (some) elements of $G$ and whose edges reflect a certain structure property of $G$: we refer to \cite{Cameron} for a survey on the topic. Such a technique has proven to be a valuable tool to study various aspects of finite groups. 
In this work we will focus our attention on the \emph{non-commuting} graph of the non-abelian finite group $G$, that is, the graph $\Gamma_G$ whose vertices are the non-central elements of $G$ and in which two vertices are joined by an edge if they do not commute. Notice that if $G$ and $H$ are finite groups with isomorphic non-commuting graphs $\Gamma_G \cong \Gamma_H$, then $|G| - |Z(G)| = |H| - |Z(H)|$. 
In \cite{AAM} the authors studied what structural properties can be deduced from the assumption that two finite groups have isomorphic non-commuting graphs, posing (among others) the following two conjectures:

\begin{conj}\cite[Conjecture 1.1]{AAM}\label{conjOrder} Let $G$ and $H$ be two non-abelian finite groups such that $\Gamma_G \cong \Gamma_H$.
Then $|G| = |H|$.
\end{conj}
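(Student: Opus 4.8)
The plan is to extract from the single datum $\Gamma_G \cong \Gamma_H$ as many arithmetic invariants of $G$ and $H$ as possible, and then argue that together they force $|G| = |H|$. First I would record the two most basic invariants. An isomorphism preserves the number of vertices, so $|G| - |Z(G)| = |H| - |Z(H)| =: n$. It also preserves the degree sequence, and since a non-central $x$ fails to commute exactly with the non-central elements outside its centralizer, $\deg_{\Gamma_G}(x) = |G| - |C_G(x)|$. Passing to co-degrees $n - \deg(x)$ removes the unknown $|G|$ and shows that the multiset
\[
\{\, |C_G(x)| - |Z(G)| : x \in G \setminus Z(G) \,\}
\]
is a graph invariant, hence equal to the corresponding multiset for $H$.

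The second step is to turn the edge count into an equation linking orders and class numbers. Summing degrees and using $\sum_{x \in G} |C_G(x)| = |G|\, k(G)$, where $k(G)$ denotes the number of conjugacy classes of $G$, a short computation gives
\[
|E(\Gamma_G)| = \frac{|G|\,(|G| - k(G))}{2}.
\]
Equating this with the same expression for $H$ yields $|G|(|G| - k(G)) = |H|(|H| - k(H))$. Together with $|G| - |Z(G)| = |H| - |Z(H)|$ and the matching co-degree multisets above, this is the pool of elementary relations I would work with; the extremal entries of the multiset (the largest and smallest centralizers of non-central elements) give further linear relations between $|G|$, $|Z(G)|$ and their $H$-counterparts.

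The hard part, and the real content of the conjecture, is that all of these relations are invariant under the substitution that inflates $|G|$ and $|Z(G)|$ together while keeping $n$ fixed: the graph sees only $|C_G(x)| - |Z(G)|$, never $|Z(G)|$ itself, so one must inject genuinely group-theoretic arithmetic to decouple $|Z(G)|$ from $|G|$. My proposed lever is divisibility: each $|C_G(x)| = (\text{co-degree}) + |Z(G)|$ must divide $|G| = n + |Z(G)|$, and the class equation constrains how the multiplicities in the co-degree multiset fit together. Forcing a common value of $|Z(G)|$ from these divisibility and counting constraints is where I expect the difficulty to concentrate, and it is why the statement is not purely combinatorial.

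Finally, I would not attack the fully general case head-on but would first isolate the regime where the centralizer structure is rigid enough for the above to close, namely finite AC-groups. When all centralizers of non-central elements are abelian, commuting is a transitive relation on $G \setminus Z(G)$, so the commuting graph is a disjoint union of cliques and $\Gamma_G$ is complete multipartite with parts $C_G(x) \setminus Z(G)$. For complete multipartite graphs the isomorphism type is exactly the multiset of part sizes, so $\Gamma_G \cong \Gamma_H$ reduces to the equality of the multisets $\{|C_G(x)| - |Z(G)|\}$ and $\{|C_H(y)| - |Z(H)|\}$ taken over distinct centralizers. In this setting the part sizes, together with the divisibility of each $|C_G(x)|$ into $|G|$ and the constraint that the part sizes sum to $n$, become highly restrictive, and I would expect the separation $|Z(G)| = |Z(H)|$ — and hence $|G| = |H|$ — to follow from a careful analysis of these arithmetic constraints, which is exactly the AC-group regime in which the rest of the paper operates.
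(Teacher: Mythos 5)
This statement is a \emph{conjecture} in the paper, not a theorem: the paper offers no proof of it, and in fact explicitly records that it is false. The introduction cites a construction of two AC-groups $G = P\times A$ and $H = Q\times B$ with $\Gamma_G\cong\Gamma_H$, where $|P|=2^{10}$, $|Q|=5^6$ and $2^5|A|=2^2\cdot 5^3\cdot|B|$, so that $|G|\neq|H|$. So no proof attempt can succeed, and your proposal must break somewhere.

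The place it breaks is exactly your final step. Your preliminary observations are all correct and standard: the vertex count gives $|G|-|Z(G)|=|H|-|Z(H)|$, the co-degree multiset $\{|C_G(x)|-|Z(G)|\}$ is an invariant, the edge count gives $|G|(|G|-k(G))=|H|(|H|-k(H))$, and for an AC-group $\Gamma_G$ is complete multipartite so the isomorphism type is the multiset of part sizes. You also correctly identify the obstruction: the graph only ever sees differences $|C_G(x)|-|Z(G)|$, never $|Z(G)|$ itself. But your proposed lever --- divisibility of each $|C_G(x)|$ into $|G|$ plus the class equation --- does not decouple $|Z(G)|$ from $|G|$, and the counterexample shows it cannot: it lives precisely in the AC-group regime where you expect the arithmetic to ``close.'' Note also that restricting to AC-groups is not an easier special case to try first; by \cite[Proposition 3.3]{AAM} the general conjecture \emph{reduces} to the AC case, so the AC case carries the full difficulty --- and is where the conjecture fails. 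The honest status, as reflected in the paper, is that the order conjecture is false, while the nilpotency conjecture (Conjecture \ref{conjNilp}) survives and is what the paper actually makes progress on.
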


\begin{conj}\cite[Conjecture 3.25]{AAM}\label{conjNilp} Let $G$ be a finite non-abelian nilpotent group and $H$ be a group such that $\Gamma_G \cong \Gamma_H$. Then $H$ is nilpotent.
\end{conj}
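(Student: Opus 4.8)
The plan is to prove the full conjecture by transporting enough group-theoretic data across the isomorphism $\Gamma_G \cong \Gamma_H$ to force every Sylow subgroup of $H$ to be normal, which for a finite group is equivalent to nilpotency. The starting point is the standard dictionary between the graph and the groups. The vertex set has size $|G|-|Z(G)| = |H|-|Z(H)|$; the degree of a vertex $x$ equals $|G|-|C_G(x)|$, since its neighbours are exactly the non-central elements of $G\setminus C_G(x)$, so an isomorphism matches the two degree multisets $\{|G|-|C_G(x)|\}$ and $\{|H|-|C_H(y)|\}$; and the \emph{maximal independent sets} of $\Gamma_G$ are precisely the sets $A\setminus Z(G)$ as $A$ ranges over the maximal abelian subgroups of $G$. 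Indeed, a set of pairwise commuting non-central elements generates, together with $Z(G)$, an abelian subgroup, and one checks that maximality of the independent set corresponds exactly to maximality of this abelian subgroup, giving a size-preserving bijection $S \leftrightarrow A = \langle S, Z(G)\rangle$. Thus the isomorphism also matches the lattices of maximal abelian subgroups of $G$ and of $H$, together with all their orders.

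First I would reduce to the case where $G$ is essentially a product of non-abelian $p$-groups. Writing $G = N \times A$ with $A$ the product of the abelian Sylow subgroups and $N = Q_1 \times \cdots \times Q_s$ the product of the non-abelian ones (possible precisely because $G$ is nilpotent), one verifies directly that two elements commute if and only if their $N$-components commute, since $A$ is central and abelian. Hence $\Gamma_G$ is obtained from $\Gamma_N$ by replacing each vertex with an independent set of size $|A|$, while $\Gamma_N$ itself has adjacency given by the coordinatewise non-commuting rule on $Q_1 \times \cdots \times Q_s$, i.e. two tuples are joined exactly when they fail to commute in at least one coordinate. This realises $\Gamma_G$ as a highly structured blow-up of the factor graphs $\Gamma_{Q_i}$, and the goal becomes to recognise this structure intrinsically and then to reconstruct an analogous decomposition of $H$.

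Next I would exploit arithmetic regularity of the degree sequence. Inside each factor $Q_i$ every centralizer index $|Q_i : C_{Q_i}(x)|$ is a power of the prime $p_i$, so the degrees coming from $G$ are constrained to a multiplicative pattern governed by the primes $p_1,\dots,p_s$. The aim is to show that any $H$ realising the same graph must exhibit the same pattern of centralizer indices --- all prime powers over a matching set of primes --- and then to use the maximal-abelian-subgroup dictionary to locate, for each such prime $p$, a normal Sylow $p$-subgroup of $H$, with the remaining vertices forced into $Z(H)$. Once every Sylow subgroup of $H$ is shown to be normal, $H$ is nilpotent and the conjecture follows.

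The main obstacle is exactly the step of reading the degrees as prime powers, and this is where the conjecture resists a full proof. The non-commuting graph does \emph{not} determine $|H|$ --- that is precisely the still-open Conjecture \ref{conjOrder} --- so the matched degree $|H|-|C_H(y)|$ cannot be interpreted as a prime power without first knowing the additive shift $|H|$. In principle a non-nilpotent $H$ could reproduce the whole degree multiset, and even all maximal-abelian-subgroup orders, while possessing a centralizer index divisible by two distinct primes. Ruling this out seems to require either first proving $|G|=|H|$ when $G$ is nilpotent, thereby reducing the nilpotency conjecture to the order conjecture and recovering the exact centralizer-order multiset $\{|C_G(x)|\}=\{|C_H(y)|\}$ from which the normal-Sylow structure is expected to follow, or else isolating an order-independent combinatorial invariant of $\Gamma_G$ equivalent to nilpotency. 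The AC-group case circumvents this obstacle because there all centralizers are abelian and $\Gamma_G$ is complete multipartite, so the partition into parts directly exposes the centralizer structure; the genuine difficulty lies in the non-AC configurations, where distinct centralizers overlap and this clean partition is lost.
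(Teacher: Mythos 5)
Your proposal is not a proof, and you acknowledge as much at the decisive step; but it is worth being precise about why the strategy cannot work in the form you outline, because the paper itself contains the refuting example. You hope to show that ``any $H$ realising the same graph must exhibit the same pattern of centralizer indices --- all prime powers over a matching set of primes,'' and you offer as a fallback ``first proving $|G|=|H|$ when $G$ is nilpotent.'' Both are killed by the counterexample recalled in the introduction \cite{counter.non.comm}: there exist AC-groups $G = P \times A$ and $H = Q \times B$ with $\Gamma_G \cong \Gamma_H$, where $P$ is a non-abelian $2$-group of order $2^{10}$ and $Q$ a non-abelian $5$-group of order $5^6$. Both groups are nilpotent, yet $|G| \neq |H|$ and the centralizer-index primes do not match ($2$ versus $5$). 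So Conjecture \ref{conjOrder} is false as stated, the order multiset $\{|C_G(x)|\}$ is genuinely not a graph invariant, and any proof of Conjecture \ref{conjNilp} must tolerate a change of prime and of order --- as the paper's Theorem \ref{main} explicitly does by allowing $H = Q \times B$ for a possibly different prime $q$. A smaller inaccuracy in the same spirit: your claim that the isomorphism matches the maximal abelian subgroups ``together with all their orders'' is an overclaim; the graph only determines the sizes $|A| - |Z(G)|$ of the maximal independent sets, and recovering $|A|$ requires the unknown shift $|Z(G)|$, the same additive-shift problem you correctly flag for degrees.

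For comparison, the statement you were given is an open conjecture, and the paper does not prove it either; it proves the AC-group case under the hypothesis $|Z(G)| \geq |Z(H)|$. The paper's route is quite different from your Sylow-normality programme: it first observes that $\Gamma_G$ complete multipartite forces $H$ to be a finite AC-group (Lemma \ref{H.AC-group}), then invokes Schmidt's classification of solvable AC-groups (Theorem \ref{classification}) and eliminates the non-nilpotent types one by one via arithmetic identities of the form $|C_H(h)| - |Z(H)| = |C_G(g)| - |Z(G)|$ --- exactly the shift-invariant quantities that survive the failure of Conjecture \ref{conjOrder}. The one configuration that cannot be eliminated outright (type (3), a central-by-Frobenius quotient) is shown to require $|Z(H)| > |Z(G)|$, which is why the center-order hypothesis appears. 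Your closing diagnosis --- that the AC case is tractable because the multipartite structure exposes the centralizer partition, while overlapping centralizers in non-AC groups destroy it --- is accurate, but the correct lesson from the counterexample is stronger: even a complete combinatorial reconstruction of the centralizer lattice cannot pin down the primes or the order, so the missing idea is not a better invariant dictionary but an argument, like the paper's Diophantine analysis of Theorem \ref{exception}, that constrains the non-nilpotent alternatives directly.
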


Conjecture \ref{conjNilp} appears also in the Kourovka Notebook \cite{KM}, stated as Problem 16.1. 
 In \cite[Theorem 3.24]{AAM} the authors proved that Conjecture \ref{conjOrder} implies Conjecture \ref{conjNilp}:

 \begin{theorem}\label{same.order}
Let $G$ be a finite non-abelian nilpotent group and $H$ be a group such that $\Gamma_G \cong \Gamma_H$. If $|G| = |H|$ then $H$ is nilpotent.
 \end{theorem}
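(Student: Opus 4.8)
The plan is to squeeze out of the graph isomorphism exactly the arithmetic data that nilpotency of $G$ pins down, and then to recognise that the same data forces $H$ to split as the direct product of its Sylow subgroups. First I would record the elementary consequences of $\Gamma_G \cong \Gamma_H$ combined with $|G|=|H|$. The identity $|G|-|Z(G)| = |H|-|Z(H)|$ noted above gives $|Z(G)|=|Z(H)|$ at once. Moreover the degree of a non-central vertex $x$ in $\Gamma_G$ is exactly $|G| - |C_G(x)|$, since every element failing to commute with $x$ is automatically non-central. A graph isomorphism $\phi$ preserves degrees, so $|C_G(x)| = |C_H(\phi(x))|$ for every non-central $x$; hence $G$ and $H$ have the same multiset of centralizer orders. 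Summing degrees and using $\sum_{x\in G}|C_G(x)| = k(G)\,|G|$, where $k$ counts conjugacy classes, the edge count of $\Gamma_G$ equals $\tfrac12 |G|(|G|-k(G))$, so preservation of edges yields $k(G)=k(H)$. Thus $G$ and $H$ share their order, their centre order, their full list of centralizer orders, and their number of conjugacy classes.

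Second, I would translate nilpotency of $G$ into this language. Writing $G = \prod_p P_p$ as the product of its Sylow subgroups, every conjugacy class length factors as a product of coprime prime powers, and, crucially, for each prime $p$ the set of elements of $p$-power order is exactly $P_p$, so there are precisely $|G|_p$ of them. Equivalently, $G$ is nilpotent if and only if every Sylow subgroup is normal. The target then becomes to prove that each Sylow subgroup of $H$ is normal, using only the invariants transported by $\phi$.

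The hard part is precisely this last transfer: a graph isomorphism need not respect element orders, so the count of $p$-elements, which is the quantity detecting normality of a Sylow subgroup, is not visibly an invariant of $\Gamma_H$. To bridge this gap I would proceed in two stages. I would first attempt to show that $H$ is solvable, reducing, via the matched centralizer data and the fact that proper centralizers in a nilpotent group are themselves nilpotent, to an inductive statement about the non-commuting graphs induced on centralizers $C_G(x)$. Then I would invoke a characterization of nilpotency among solvable groups in terms of conjugacy class sizes to promote the equality of class-size data to nilpotency of $H$. An alternative route is to exploit the product (blow-up) structure that $\Gamma_G$ inherits from $G=\prod_p P_p$, to recognise this decomposition intrinsically inside $\Gamma_H$, and to lift it to a direct-product decomposition of $H$. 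Either way, converting the preserved numerical data back into the group-theoretic statement that every Sylow subgroup of $H$ is normal is where the real work, and the genuine difficulty of the underlying conjecture, lies.
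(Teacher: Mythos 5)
Your reduction of the hypotheses to numerical invariants is correct and is exactly the right first move: from $|G|=|H|$ and the preservation of vertex counts and vertex degrees you get $|Z(G)|=|Z(H)|$, the equality of the multisets of centralizer orders $\{|C_G(x)|\}_{x\notin Z(G)}$ and $\{|C_H(y)|\}_{y\notin Z(H)}$, and hence (together with the equal number of central, i.e.\ singleton, classes) the equality of the multisets of conjugacy class sizes of $G$ and $H$. But the proof stops there: you name the remaining step as ``where the real work lies'' and offer two routes without carrying either out, so what you have is a plan with a genuine gap rather than a proof. The missing ingredient is a specific known theorem, due to Cossey, Hawkes and Mann, asserting that a finite group whose multiset of conjugacy class sizes coincides with that of some nilpotent group is itself nilpotent. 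That result, applied to $H$ with comparison group $G$, closes the argument immediately and needs no solvability hypothesis; this is essentially how the cited source [AAM, Theorem 3.24] concludes (the present paper does not reprove the statement, it only quotes it).

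Two further cautions about the routes you sketch. The inductive route through centralizers does not work as stated: although a graph isomorphism $\phi$ does carry the non-neighbourhood $C_G(x)\setminus Z(G)$ of $x$ bijectively and adjacency-preservingly onto $C_H(\phi(x))\setminus Z(H)$, the induced subgraph on $C_H(\phi(x))\setminus Z(H)$ is \emph{not} the non-commuting graph of $C_H(\phi(x))$, whose vertex set is $C_H(\phi(x))\setminus Z(C_H(\phi(x)))$ and whose centre may be strictly larger than $Z(H)$; so the inductive hypothesis cannot be applied to the centralizers. And for the second route, there is no characterization of nilpotency ``among solvable groups in terms of conjugacy class sizes'' that you could invoke without specifying it; the Cossey--Hawkes--Mann criterion is the precise statement you need, and citing (or proving) it is the substance of the proof.
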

 
Conjecture \ref{conjOrder} 
has been proven to be true when $G$ is a $p$-group (\cite[Theorem 1.2]{non.comm.p.groups}), a dihedral group (\cite[Proposition 3.19]{AAM}) or a simple group (\cite[Theorem A]{dar}), where in the latter case we can even conclude that $G \cong H$ (\cite[Theorem 1.1]{SW}). Moreover, in \cite[Proposition 3.3]{AAM} the authors showed that it is sufficient to prove Conjecture \ref{conjOrder}  for a specific class of finite groups: the class of finite groups in which all centralizers of non-central elements are abelian, called AC-groups. 
 However, in \cite{counter.non.comm}, the author constructed two AC-groups $G = P \times A$ and $H = Q \times B$ with isomorphic non-commuting graphs, where $P$ is a non-abelian $2$-group of order $2^{10}$, $Q$ is a non-abelian $5$-group of order $5^6$, $A$ and $B$ are abelian groups satisfying $2^5\cdot|A|=2^2 \cdot 5^3 \cdot |B|$, and in general, $|G| = 2^{10}|A| \neq 5^6|B| = |H|$. 
 Nevertheless, Conjecture \ref{conjNilp} is not affected by this example and is in fact still open. One of the major progresses made toward its proof is the following result:
 
\begin{theorem}\cite[Theorem 2.4]{non-comm.nilp}\label{nilp.two.sylow}
Let $G$ be a finite non-abelian nilpotent group and suppose $\Gamma_G \cong \Gamma_H$ for a group $H$. If $G$ has at least two distinct non-abelian Sylow subgroups and $|Z(G)| \geq |Z(H)|$ then $|G|=|H|$, (and so $H$ is nilpotent by Theorem~\ref{same.order}).
\end{theorem}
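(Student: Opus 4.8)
The plan is to prove Theorem~\ref{nilp.two.sylow} by leveraging the key combinatorial invariant already noted in the introduction: if $\Gamma_G \cong \Gamma_H$ then $|G| - |Z(G)| = |H| - |Z(H)|$. Since $G$ is nilpotent, write $G = P_1 \times \cdots \times P_k \times A$, where the $P_i$ are the non-abelian Sylow subgroups (at least two by hypothesis) and $A$ collects the abelian Sylow factors into the remaining direct factor. The first step is to understand the centralizer structure of $\Gamma_G$: because $G$ is a direct product, the centralizer of a non-central element, modulo the center, reflects the product structure, and one can read off from $\Gamma_G$ the sizes of the ``components'' coming from each non-abelian factor. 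The aim is to recover enough numerical data about $G$ from the graph that the same data, forced on $H$, pins down $|H|$.

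First I would establish that $H$ must itself be (essentially) an AC-group or at least have a controlled centralizer structure, or alternatively reduce to the AC-case via \cite[Proposition 3.3]{AAM} as cited in the excerpt. Then I would extract from $\Gamma_G$ the set of centralizer sizes $\{|C_G(x)| : x \in G \setminus Z(G)\}$ up to the ambiguity introduced by $|Z(G)|$ versus $|Z(H)|$: an isomorphism $\Gamma_G \cong \Gamma_H$ matches vertices and hence matches the degrees, and the degree of a vertex $x$ is $|G| - |C_G(x)|$. The hypothesis $|Z(G)| \geq |Z(H)|$ is what lets me convert degree information (which only sees $|G| - |C_G(x)|$) into genuine order information. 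The crux is to show that having two distinct non-abelian Sylow subgroups forces the multiset of vertex degrees to be rich enough that $|G|$ and $|Z(G)|$ are separately determined, rather than only their difference.

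The main computation would compare, on both sides, the quantity $\sum_{x} (|G| - |C_G(x)|)$ (twice the number of edges) together with the number of vertices $|G| - |Z(G)|$, and to exploit the divisibility constraints coming from nilpotency: each $|C_G(x)|$ is divisible by $|Z(G)|$ and by the orders of the abelian Sylow factors, and the presence of two non-abelian Sylow primes $p \neq q$ means that some vertices have centralizer index a power of $p$ and others a power of $q$. Matching these prime-power indices across the isomorphism, under the constraint $|Z(G)| \geq |Z(H)|$, should force $|Z(G)| = |Z(H)|$ and hence $|G| = |H|$; the conclusion that $H$ is nilpotent then follows immediately from Theorem~\ref{same.order}.

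The hard part, I expect, will be the last matching step: ruling out the ``mixing'' phenomenon exhibited by the counterexample in \cite{counter.non.comm}, where a single non-abelian $p$-group factor on the $G$-side is replaced by a non-abelian $q$-group factor on the $H$-side with a compensating change in the abelian part. The hypothesis of \emph{two} distinct non-abelian Sylow subgroups is presumably exactly what obstructs such a swap, since the two independent prime-power centralizer indices on the $G$-side cannot both be absorbed into a single replacement prime on the $H$-side while simultaneously respecting $|Z(G)| \geq |Z(H)|$ and the fixed vertex count. Making this obstruction precise — showing that the two sets of prime-power indices cannot be simultaneously realized by a structurally different $H$ — is where the real work lies.
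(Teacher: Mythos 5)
This statement is quoted from \cite[Theorem 2.4]{non-comm.nilp} and the paper gives no proof of its own, so there is no internal argument to compare yours against; your attempt therefore has to stand on its own, and as written it does not. The decisive step --- showing that the two independent prime-power centralizer indices coming from the two non-abelian Sylow subgroups cannot be ``absorbed'' by a structurally different $H$ while respecting $|Z(G)| \geq |Z(H)|$ --- is exactly the content of the theorem, and you explicitly defer it (``where the real work lies''). Everything you do carry out (the identities $|G|-|Z(G)|=|H|-|Z(H)|$ and $|C_G(x)|-|Z(G)|=|C_H(\phi(x))|-|Z(H)|$, the decomposition $G=P_1\times\cdots\times P_k\times A$, the observation that centralizer indices are powers of $p$ or of $q$) is the routine setup that the paper already records in its Remark on graph isomorphisms; no part of the proposal converts these constraints into $|Z(G)|=|Z(H)|$, which is what is needed before Theorem~\ref{same.order} can be invoked. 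A workable route is to pick non-central $x\in P_1$ and $y\in P_2$, note that $[G:C_G(x)]$ is a $p_1$-power and $[G:C_G(y)]$ a $p_2$-power with $C_G(x)C_G(y)=G$, and play the resulting coprime divisibility conditions against the displayed identities; but none of that arithmetic appears in your text.

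A second, concrete misstep: the suggestion to ``reduce to the AC-case via \cite[Proposition 3.3]{AAM}'' cannot work here. A nilpotent group with two distinct non-abelian Sylow subgroups is never an AC-group: if $x\in P_1\setminus Z(P_1)$, then $C_G(x)$ contains the non-abelian factor $P_2$, hence is non-abelian. This is precisely why the present paper treats the AC-case (a single non-abelian Sylow subgroup, Theorem~\ref{classification}(5)) separately from Theorem~\ref{nilp.two.sylow}. Moreover, the reduction in \cite[Proposition 3.3]{AAM} says that Conjecture~\ref{conjOrder} as a whole follows from its AC-instance; it does not let you replace a given pair $(G,H)$ by an AC-pair while preserving the hypotheses of this particular theorem. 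Finally, comparing only the aggregate quantity $\sum_x\bigl(|G|-|C_G(x)|\bigr)$ (twice the edge count) discards information: the graph isomorphism gives the vertex-by-vertex equality $|C_G(x)|-|Z(G)|=|C_H(\phi(x))|-|Z(H)|$, and it is this pointwise version, applied to well-chosen $x$, that any successful argument must exploit.
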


Theorem \ref{nilp.two.sylow} suggests that, in order to prove Conjecture \ref{conjNilp}, it is necessary to study the class of of finite non-abelian nilpotent groups having a unique non-abelian Sylow subgroup, that is, finite groups of the form $G=P \times A$, where $p$ is a prime, $P \in \Syl_p(G)$ is non-abelian and $A$ is an abelian $p'$-group. 
Taking inspiration from these considerations, we pose the following conjecture:

\begin{conj}
\label{conj1}
Let $p$ be a prime and suppose $G=P \times A$ is a finite group where $P \in \Syl_p(G)$ is non-abelian and $A$ is an abelian $p'$-group. If $\Gamma_G \cong \Gamma_H$ for a finite group $H$ and $|Z(G)| \geq |Z(H)|$ then $H=Q \times B$, where $q$ is a prime, $Q \in \Syl_q(H)$ is non-abelian and $B$ is an abelian $q'$-group. In particular, $H$ is nilpotent.
\end{conj}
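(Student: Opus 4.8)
The plan is to read off the structure of $H$ from the special shape that $\Gamma_G$ takes when $G = P \times A$. First I would record the elementary structural facts: since $A$ is an abelian $p'$-group that is a direct factor, $Z(G) = Z(P) \times A$ and, for a non-central element $(x,a)$ (equivalently $x \notin Z(P)$), one has $C_G((x,a)) = C_P(x) \times A$. Consequently adjacency in $\Gamma_G$ depends only on the $P$-coordinate, so $\Gamma_G$ is the blow-up of $\Gamma_P$ obtained by replacing each vertex by $|A|$ pairwise non-adjacent copies; in particular $\Gamma_G$ has $(|P| - |Z(P)|)|A|$ vertices and every non-neighbourhood has size $(|C_P(x)| - |Z(P)|)|A|$.

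Transferring this through a fixed isomorphism $\Gamma_G \cong \Gamma_H$ yields two families of constraints on $H$: the global count $|H| - |Z(H)| = (|P| - |Z(P)|)|A|$, and, for every non-central $h \in H$, the local identity $|C_H(h)| - |Z(H)| = (|C_P(x)| - |Z(P)|)|A|$ for the element $x \in P$ matched to $h$. Together with the hypothesis $|Z(H)| \le |Z(G)| = |Z(P)||A|$, these divisibility relations are the arithmetic engine of the argument: they pin down $|Z(H)|$, force every $|C_H(h)|$ into a common $p$-power pattern, and should ultimately identify a single prime $q$ as the one governing the non-abelian part of $H$.

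When $P$ is itself an AC-group the argument closes immediately. Then $G$ is a nilpotent AC-group, so $\Gamma_G$ is complete multipartite (its parts being the sets $C \setminus Z(G)$ for the maximal abelian subgroups $C$); hence $\Gamma_H$ is complete multipartite as well, which forces $H$ to be an AC-group. At this point I would invoke the characterization of $H$ established earlier for AC-groups $G$ with $|Z(G)| \ge |Z(H)|$ and specialize it to $G = P \times A$: a nilpotent AC-group has a unique non-abelian Sylow subgroup, and the characterization then delivers exactly $H = Q \times B$ with $Q \in \Syl_q(H)$ non-abelian and $B$ an abelian $q'$-group.

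The main obstacle is the case in which $P$ is not an AC-group. Here $\Gamma_G$ is no longer complete multipartite, so the clean equivalence ``complete multipartite $\Leftrightarrow$ AC'' is unavailable and one cannot immediately conclude that $H$ is an AC-group. I expect the right tool to be the \emph{false-twin} (equal-neighbourhood) equivalence classes of $\Gamma_G$: these are a graph-isomorphism invariant, on the $G$-side they are precisely the blow-ups by $A$ of the equal-centralizer classes of $\Gamma_P$, and the induced quotient graph is $\Gamma_P$ modulo twins. Matching this reduced picture on the $H$-side, and combining it with the centralizer arithmetic above and a Sylow-theoretic analysis, should show that the commuting structure of $H$ concentrates on a single prime and hence that $H = Q \times B$. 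The genuinely hard point is to rule out a non-nilpotent $H$ whose centralizer sizes merely happen to satisfy the same numerical constraints: controlling the set of primes dividing $|H|$ without the multipartite structure, and excluding such numerical coincidences, is where the difficulty of Conjecture \ref{conjNilp} in the unique-non-abelian-Sylow case is concentrated.
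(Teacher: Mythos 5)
There is a genuine gap here, and it is worth being precise about its nature: the statement you are proving is posed in the paper as Conjecture~\ref{conj1}, and the paper does \emph{not} prove it in general. The paper's actual contribution is Theorem~\ref{main}, which establishes the conjecture only under the additional hypothesis that $G$ (equivalently $P$) is an AC-group, via the Schmidt classification of solvable AC-groups (Theorem~\ref{classification}) and a case-by-case arithmetic elimination (Lemmas~\ref{type1}, \ref{type2.4} and Theorem~\ref{exception}). Your first three paragraphs are sound: the description of $\Gamma_G$ as a blow-up of $\Gamma_P$ by $|A|$ independent copies of each vertex is correct, the counting identities you extract are exactly Equations~(\ref{basic1})--(\ref{basic3}) of the paper, and in the AC case your appeal to the complete-multipartite characterization plus Theorem~\ref{main} (with option (2) excluded by $|Z(G)|\geq |Z(H)|$) does recover the paper's Corollary. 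But that portion only reproduces what the paper already knows.

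The gap is your final paragraph. For $P$ not an AC-group you offer a strategy (false-twin classes, the quotient graph, ``should show that the commuting structure of $H$ concentrates on a single prime'') but no argument: every decisive step is flagged with ``should'' or ``I expect.'' In particular, nothing you write rules out a non-nilpotent $H$ whose centralizer orders happen to satisfy all the numerical identities coming from the graph isomorphism --- and this is precisely the difficulty that makes the statement a conjecture rather than a theorem. Without the AC hypothesis you lose the two tools the paper's proof actually relies on: the structural classification of the possible $H$ (Theorem~\ref{classification}, which requires $H$ to be an AC-group, deduced from $\Gamma_H$ being complete multipartite) and the disjoint-centralizer identity of Corollary~\ref{centralizers.intersection} that powers the clique-number count in Lemma~\ref{clique}. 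As written, your proposal proves the statement only in the case the paper already covers and leaves the remaining (open) case unresolved.
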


If Conjecture \ref{conj1} is true, then, combining it with Theorem \ref{nilp.two.sylow}, we can immediately deduce Conjecture \ref{conjNilp} as long as $|Z(G)| \geq |Z(H)|$:

\begin{theorem}\label{3implies2}
Let $G$ be a finite non-abelian nilpotent group and suppose $\Gamma_G \cong \Gamma_H$ for a group $H$. If Conjecture \ref{conj1} holds and $|Z(G)| \geq |Z(H)|$ then $H$ is nilpotent.
\end{theorem}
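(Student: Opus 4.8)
The plan is to exploit the fact that a finite nilpotent group decomposes as the internal direct product of its Sylow subgroups, $G = P_1 \times \cdots \times P_k$, and to split the argument according to how many of these Sylow factors are non-abelian. Since $G$ is non-abelian, at least one factor is non-abelian, so there are exactly two cases: either $G$ has at least two distinct non-abelian Sylow subgroups, or it has a unique one. The whole proof is then a routing of each case to one of the two results already available.

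First I would treat the case in which $G$ has (at least) two distinct non-abelian Sylow subgroups. Here the hypotheses of Theorem \ref{nilp.two.sylow} are met verbatim: $G$ is finite non-abelian nilpotent, $\Gamma_G \cong \Gamma_H$, $G$ has at least two non-abelian Sylow subgroups, and $|Z(G)| \geq |Z(H)|$. That theorem then yields $|G| = |H|$, whence $H$ is nilpotent by Theorem \ref{same.order}. Note that Conjecture \ref{conj1} is not invoked in this branch at all.

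The remaining case is that $G$ has a unique non-abelian Sylow subgroup $P \in \Syl_p(G)$. Collecting the remaining (abelian) Sylow subgroups into a single factor $A$, we have $G = P \times A$ with $A$ an abelian $p'$-group, which is precisely the shape assumed in Conjecture \ref{conj1}. Since we are assuming that conjecture holds, and since $\Gamma_G \cong \Gamma_H$ and $|Z(G)| \geq |Z(H)|$ by hypothesis, we conclude directly that $H$ is nilpotent. Combining the two cases completes the proof.

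I do not expect a genuine obstacle: the statement is essentially a bookkeeping combination of Theorem \ref{nilp.two.sylow} and Conjecture \ref{conj1}, with all the substantive content hidden inside those two inputs. The only point deserving a line of care is the exhaustiveness and disjointness of the case split, namely verifying that ``$G$ has a unique non-abelian Sylow subgroup'' is exactly the situation left uncovered by Theorem \ref{nilp.two.sylow}, and that in that case the standard nilpotent decomposition really produces a factor $A$ that is a $p'$-group, so that the hypotheses of Conjecture \ref{conj1} are literally satisfied.
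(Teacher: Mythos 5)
Your proof is correct and follows exactly the route the paper intends: the paper states Theorem \ref{3implies2} as an immediate combination of Theorem \ref{nilp.two.sylow} (for the case of at least two non-abelian Sylow subgroups) with Conjecture \ref{conj1} (for the case of a unique non-abelian Sylow subgroup), which is precisely your case split. Nothing is missing.
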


As for Conjecture \ref{conj1}, it is known to be true if $G$ is a $p$-group (by \cite[Theorem 1.2]{non.comm.p.groups}) and if the graph $\Gamma_G$ is regular (as this means that the conjugacy classes of non-central elements of $G$ have only one size and we conclude by \cite[Theorem 1]{ito}).

In this work, we prove Conjecture \ref{conj1} when $G$ is an AC-group. More precisely, we obtain the following stronger result:

\begin{theoremA}\label{main}
Let $p$ be a prime and let $G = P \times A$ be a finite non-abelian nilpotent AC-group, where $P \in \Syl_p(G)$ is non-abelian and $A$ is an abelian $p'$-group. Let $H$ be a group such that $\Gamma_G \cong \Gamma_H$. Then $H$ is a finite AC-group and either 
\begin{enumerate}
\item $H = Q \times B$, where $q$ is a prime, $Q \in \Syl_q(H)$ is non-abelian and $B$ is an abelian $q'$-group; or
\item $|Z(H)| > |Z(G)|$, $[P \colon Z(P)] >p^4$, none of the maximal subgroups of $G$ is abelian (so $P$ does not have maximal nilpotecncy class) and $H/Z(H)$ is a Frobenius group with Frobenius kernel and complement $F/Z(H)$ and $K/Z(H)$, respectively, where $K$ is an abelian subgroup of $H$, $Z(F) = Z(H)$, and 
$[ F \colon Z(H)] = q^t$ for some prime $q \neq p$. 
\end{enumerate}
\end{theoremA}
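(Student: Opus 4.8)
The plan is to start from the given isomorphism $\Gamma_G \cong \Gamma_H$ and extract the two fundamental invariants: the vertex count $|G|-|Z(G)| = |H|-|Z(H)|$, and the fact that an isomorphism of non-commuting graphs identifies the ``closed neighbourhood plus non-neighbours'' structure with centralizers. Concretely, two non-central elements $x,y$ commute in $G$ precisely when they are non-adjacent in $\Gamma_G$, so the maximal cliques, independent sets, and the partition of vertices by ``same centralizer'' are graph-theoretic invariants. Since $G$ is an AC-group, the centralizers of non-central elements are abelian and partition $G\setminus Z(G)$ into cosets of a family of abelian subgroups intersecting pairwise in $Z(G)$; this is the standard AC-group decomposition $G = Z(G) \cup \bigl(\bigcup_i (C_i\setminus Z(G))\bigr)$ with $C_i \cap C_j = Z(G)$ for $i\neq j$. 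I would first translate this decomposition into the independence/clique structure of $\Gamma_G$, so that the isomorphism forces $\Gamma_H$ to admit an analogous partition, and then argue that $H$ is itself an AC-group (its maximal independent sets, minus the isolated ``central'' behaviour, must correspond to abelian centralizers).

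Having established that $H$ is a finite AC-group, I would invoke the structure theory of AC-groups. Recall that a finite non-abelian AC-group falls into a short list of types: groups with $G/Z(G)$ a Frobenius group, groups that are central products or direct products with abelian factors of specified Sylow structure, and a few sporadic isoclinism families. The aim is to run through these possibilities for $H$ and show that all but the two listed outcomes are incompatible with the graph-invariant data inherited from $G = P \times A$. The key invariants to exploit are: the sizes of the centralizer cosets (equivalently, the degrees of vertices, which are determined up to the common value $|G|-|Z(G)|$ by $|C_G(x)|-|Z(G)|$), the number of distinct centralizer classes, and the multiset of their sizes. Because $G = P\times A$ is nilpotent with a single non-abelian Sylow subgroup $P$, every centralizer in $G$ has the form $C_P(u)\times A$, so the centralizer sizes are all of the form $(\text{$p$-power})\cdot|A|$ and the common ``excess'' $|C_G(x)|-|Z(G)| = |A|\,(|C_P(u)|-|Z(P)|)$ is rigidly $p$-structured. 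Matching these numerical and combinatorial constraints against the AC-structure of $H$ is the heart of the argument.

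The dichotomy in the conclusion then emerges as follows. If $H$ is nilpotent, the AC-group structure forces it to have a unique non-abelian Sylow subgroup $Q$ and to split as $H = Q\times B$ with $B$ an abelian $q'$-group, giving case (1); here I would use the matching of centralizer-coset sizes to pin down that only one prime can contribute the non-abelian part. If $H$ is not nilpotent, then among the AC-group types the only one surviving the invariant constraints is the Frobenius type, i.e.\ $H/Z(H)$ is a Frobenius group with kernel $F/Z(H)$ and an abelian complement $K/Z(H)$, with $Z(F)=Z(H)$ and $[F:Z(H)]$ a prime power $q^t$, $q\neq p$; this is case (2). The auxiliary inequalities $|Z(H)|>|Z(G)|$, $[P:Z(P)]>p^4$, and the non-abelianness of all maximal subgroups of $G$ should then be \emph{forced} rather than assumed: in case (2) the vertex-count equality $|G|-|Z(G)| = |H|-|Z(H)|$ combined with the distinct centralizer-size patterns of a direct-product AC-group versus a Frobenius AC-group produces these divisibility and size constraints, and Theorem~\ref{nilp.two.sylow} (via the $|Z(G)|\geq|Z(H)|$ hypothesis it requires) rules out case (2) whenever $|Z(G)|\geq|Z(H)|$, which is exactly why case (2) must have $|Z(H)|>|Z(G)|$.

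The main obstacle I anticipate is the case analysis in the second paragraph: showing that no AC-group type other than the direct-product and Frobenius types can realize the precise centralizer-coset data coming from $G = P\times A$. The difficulty is that the graph isomorphism only records \emph{unlabeled} combinatorial data—the number of centralizer classes and the multiset of their coset sizes—without directly recording the prime $q$ or the central quotient, so one must reconstruct enough arithmetic (from $|G|-|Z(G)| = |H|-|Z(H)|$ together with the list of centralizer sizes) to eliminate, for instance, central-product types or mixed-Sylow nilpotent configurations. I expect the finer constraints $[P:Z(P)]>p^4$ and ``no abelian maximal subgroup'' to drop out of a careful count of how many centralizer classes of each size occur, comparing the $p$-local count in $P$ against what a Frobenius complement action can produce; getting these counts to align (rather than merely to be consistent) is where the real work lies.
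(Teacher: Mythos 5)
Your high-level skeleton does match the paper's: read off that $\Gamma_H$ is complete multipartite to conclude $H$ is an AC-group, then run $H$ through the classification of AC-groups and eliminate all types except the direct-product and the relevant Frobenius type by arithmetic on the invariants $|G|-|Z(G)|=|H|-|Z(H)|$ and $|C_G(x)|-|Z(G)|=|C_H(\phi(x))|-|Z(H)|$. But there are two concrete problems. First, the classification you want to invoke (Theorem \ref{classification}) is Schmidt's classification of \emph{solvable} AC-groups; non-solvable AC-groups exist (e.g.\ $\PSL(2,q)$), so you must first dispose of the non-solvable case. The paper does this (Lemma \ref{solvable}) by citing a result that forces $|G|=|H|$ when $H$ is non-solvable and then applying Theorem \ref{same.order}; your proposal never addresses this. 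Second, and more seriously, your mechanism for forcing $|Z(H)|>|Z(G)|$ in case (2) is invalid: Theorem \ref{nilp.two.sylow} requires $G$ to have at least two distinct non-abelian Sylow subgroups, whereas here $G=P\times A$ has exactly one, so that theorem says nothing and cannot ``rule out case (2) whenever $|Z(G)|\ge|Z(H)|$.'' The paper instead extracts the identity $p^{m}(p^{n-m}-1)(c-1)=(p^{m}-1)c(q^f-1)$ (Equation (\ref{eqC})) from the centralizer-size equations, deduces $c<p^m<q^f<p^{n-m}$, and only then reads off $|Z(H)|>|Z(G)|$ from $|K|-|Z(H)|=|M|-|Z(G)|$.

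Beyond these, the actual content of the theorem is deferred rather than proved. Eliminating type (1) (abelian normal subgroup of prime index) is not a routine ``incompatibility of invariants'': the paper needs a four-claim divisibility argument (pinning down the $p$-part of $|Z(H)|$, showing the two primes coincide, and eventually forcing $|Z(H)|=|Z(G)|$ and hence $|H|=|G|$, contradicting non-nilpotency via Theorem \ref{same.order}). Likewise the constraints $[P:Z(P)]>p^4$ and the non-abelianness of maximal subgroups of $G$ come from proving $n>2m$ and $n>2s_i$ for every centralizer index $p^{s_i}$ occurring in $G$, together with $m\neq s_i$, so that $n>4$ and no index can equal $n-1$; your proposal gestures at ``a careful count'' but supplies none of the inequalities. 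As written, the proposal is a plausible plan with one broken step and the hard arithmetic left undone.
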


Given that all finite nilpotent AC-groups are of the form described in Conjecture \ref{conj1} (see Theorem \ref{classification}), we can now state our contribution to Conjecture \ref{conjNilp} as a direct consequence of Theorem \ref{main}:
\begin{cor}
Let $G$ be a finite non-abelian nilpotent AC-group and $H$ be a group such that $\Gamma_G \cong \Gamma_H$. If $|Z(G)| \geq |Z(H)|$ then $H$ is nilpotent.
\end{cor}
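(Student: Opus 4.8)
The plan is to deduce this directly from Theorem~\ref{main}, whose dichotomy does essentially all of the work. First I would invoke the classification of finite nilpotent AC-groups (Theorem~\ref{classification}): since $G$ is a finite non-abelian nilpotent AC-group, it must decompose as $G = P \times A$, where $p$ is a prime, $P \in \Syl_p(G)$ is non-abelian, and $A$ is an abelian $p'$-group. This is precisely the shape required to apply Theorem~\ref{main}, so with the given isomorphism $\Gamma_G \cong \Gamma_H$ I am entitled to its conclusion: $H$ is a finite AC-group and exactly one of the two listed alternatives holds.

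Next I would use the hypothesis $|Z(G)| \geq |Z(H)|$ to eliminate the second alternative. That alternative explicitly asserts $|Z(H)| > |Z(G)|$, which directly contradicts our assumption. Hence alternative~(1) must hold, giving $H = Q \times B$ with $q$ a prime, $Q \in \Syl_q(H)$ non-abelian, and $B$ an abelian $q'$-group. Since a direct product of a $q$-group and an abelian $q'$-group is nilpotent, $H$ is nilpotent, as claimed.

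The only genuinely load-bearing point is that the entire argument rests on Theorem~\ref{main}; once that theorem is in hand, the corollary is immediate. The step I would treat most carefully is checking that Theorem~\ref{classification} really forces $G$ into the exact form $P \times A$ demanded by Theorem~\ref{main}, rather than some superficially more general nilpotent-AC decomposition, since the applicability of the main theorem hinges on matching these hypotheses precisely. Beyond that there is no obstacle: the inequality $|Z(G)| \geq |Z(H)|$ is exactly the negation of the centralizer condition defining the exceptional case, so the two hypotheses dovetail to leave only the nilpotent conclusion.
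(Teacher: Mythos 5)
Your argument is correct and is exactly the paper's intended deduction: Theorem \ref{classification} puts $G$ in the form $P\times A$, Theorem \ref{main} then applies, and the hypothesis $|Z(G)|\geq |Z(H)|$ rules out alternative (2) since that case explicitly requires $|Z(H)|>|Z(G)|$. No issues.
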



\section{Finite AC-groups}
In this section we describe some properties of AC-groups, starting by recalling their definition:

\begin{definition}
A finite group $G$ is said to be an AC-group if all centralizers in $G$ of non-central elements of $G$ are abelian. 
\end{definition}

The structure of non-abelian AC-groups was deeply investigated by Schmidt (\cite{Schmidt1970}) and Rocke (\cite{RockePhD} and \cite{Rocke}). We report some useful properties of such groups.

First of all note that the centralizers of non-central elements of a finite AC-group $G$ are exactly the maximal abelian subgroups of $G$.

\begin{lemma}\cite[Lemma 3.2]{Rocke}\label{first.prop}  The following are equivalent for a finite non-abelian group~$G$.
\begin{enumerate}
\item $G$ is an AC-group.
\item If $[x,y] = 1$, then $C_G(x) = C_G(y)$, whenever $x,y \in G\backslash Z(G)$.
\item If $[x,y] = [x, z] = 1$, then $[y, z] = 1$, whenever $x \in G\backslash Z(G)$.
\item If $A$ and $B$ are subgroups of $G$ and $Z(G) < C_G(A) \leq C_G(B) < G$, then $C_G(A) = C_G(B)$.
\end{enumerate}
\end{lemma}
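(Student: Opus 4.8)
The plan is to prove the four statements equivalent by establishing the cycle $(1)\Rightarrow(2)\Rightarrow(3)\Rightarrow(1)$ and then folding in $(4)$ through $(2)\Leftrightarrow(4)$. Every step is a short manipulation of centralizers, and the single idea that recurs throughout is that when $x,y\in G\setminus Z(G)$ commute, in an AC-group their centralizers must coincide; equivalently, the centralizers of non-central elements are precisely the maximal abelian subgroups, and two such subgroups either coincide or meet only in $Z(G)$.

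For $(1)\Rightarrow(2)$: if $[x,y]=1$ with $x,y$ non-central, then $y\in C_G(x)$, and since $C_G(x)$ is abelian every element of $C_G(x)$ commutes with $y$, giving $C_G(x)\subseteq C_G(y)$; the reverse inclusion is symmetric. For $(2)\Rightarrow(3)$: given $[x,y]=[x,z]=1$ with $x$ non-central, I may assume $y,z$ are non-central (otherwise $[y,z]=1$ is immediate), so $(2)$ yields $C_G(y)=C_G(x)=C_G(z)$, and since $z\in C_G(x)=C_G(y)$ I conclude $[y,z]=1$. For $(3)\Rightarrow(1)$: for non-central $x$ and any $y,z\in C_G(x)$ one has $[x,y]=[x,z]=1$, so $(3)$ forces $[y,z]=1$, whence $C_G(x)$ is abelian.

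It remains to incorporate $(4)$. For $(4)\Rightarrow(2)$, given commuting non-central $x,y$, I set $A=\langle x,y\rangle$ and $B=\langle x\rangle$: then $C_G(A)=C_G(x)\cap C_G(y)\le C_G(x)=C_G(B)$, and both lie strictly between $Z(G)$ and $G$ because $x$ is non-central, so $(4)$ forces $C_G(x)\cap C_G(y)=C_G(x)$, i.e. $C_G(x)\subseteq C_G(y)$; symmetry gives equality. For the converse $(2)\Rightarrow(4)$, assume $Z(G)<C_G(A)\le C_G(B)<G$. Choosing a non-central $a\in A$ (possible since $C_G(A)<G$) and a non-central $x\in C_G(A)$ (possible since $C_G(A)>Z(G)$), one has $[x,a]=1$, so $(2)$ gives $C_G(x)=C_G(a)$; a short verification, applying $(2)$ once more to each non-central element of $A$ and treating the central ones trivially, identifies $C_G(A)$ with $C_G(x)$. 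The identical argument applied to $B$ yields $C_G(B)=C_G(x)$, so $C_G(A)=C_G(B)$.

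The step I expect to require the most care is $(2)\Rightarrow(4)$: unlike the element-wise statements, here the hypotheses are phrased through centralizers of \emph{subgroups}, and the main point is to show that such a centralizer collapses to the centralizer of one suitably chosen non-central element. The delicacy lies in correctly extracting the non-central witnesses $a\in A$ and $x\in C_G(A)$ from the strict inclusions $Z(G)<C_G(A)$ and $C_G(A)<G$, and in handling the central elements of $A$ (and of $B$) separately so that the equal-centralizer property of $(2)$ can be applied uniformly.
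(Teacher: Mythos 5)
The paper does not prove this lemma at all: it is quoted verbatim from Rocke with a citation, so there is no in-paper argument to compare against. Your proof is correct and complete as a self-contained verification. The cycle $(1)\Rightarrow(2)\Rightarrow(3)\Rightarrow(1)$ is handled cleanly, and the two directions involving $(4)$ both check out: in $(4)\Rightarrow(2)$ the hypothesis $Z(G)<C_G(A)$ is witnessed by $x$ itself (since $x$ centralizes both generators of $A=\langle x,y\rangle$), and in $(2)\Rightarrow(4)$ the key observation that the \emph{same} non-central $x\in C_G(A)\le C_G(B)$ centralizes every element of $B$ as well is exactly what makes the collapse $C_G(A)=C_G(x)=C_G(B)$ go through. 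No gaps.
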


\begin{cor}\label{centralizers.intersection}
Let $G$ be a finite non-abelian AC-group and let $x,y\in G \backslash Z(G)$. Then the following are equivalent
\begin{enumerate}
\item $[x,y]\neq 1$; 
\item $C_G(x) \cap C_G(y) = Z(G)$;
\item $C_G(x) \neq C_G(y)$.
\end{enumerate} 
\end{cor}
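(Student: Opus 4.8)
The plan is to establish the cyclic chain of implications $(1) \Rightarrow (2) \Rightarrow (3) \Rightarrow (1)$, drawing each step directly from the equivalent characterizations of AC-groups collected in Lemma~\ref{first.prop}. Throughout I would keep in mind two elementary facts: since $x$ and $y$ are non-central, both $C_G(x)$ and $C_G(y)$ strictly contain $Z(G)$; and the inclusion $Z(G) \subseteq C_G(x) \cap C_G(y)$ holds automatically, because central elements commute with everything.

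For $(1) \Rightarrow (2)$, I would assume $[x,y] \neq 1$ and show that no non-central element can lie in $C_G(x) \cap C_G(y)$. Indeed, if some $z \in C_G(x) \cap C_G(y)$ were non-central, then $[z,x] = [z,y] = 1$ with $z \in G \setminus Z(G)$, so applying Lemma~\ref{first.prop}(3) with $z$ in the role of the distinguished non-central element would force $[x,y] = 1$, a contradiction. Hence $C_G(x) \cap C_G(y) \subseteq Z(G)$, and combined with the automatic reverse inclusion this yields the equality in (2).

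For $(2) \Rightarrow (3)$, I would argue by contradiction: if $C_G(x) = C_G(y)$, then $C_G(x) \cap C_G(y) = C_G(x)$, which strictly contains $Z(G)$ since $x \in C_G(x) \setminus Z(G)$, contradicting (2). For $(3) \Rightarrow (1)$, I would pass to the contrapositive: if $[x,y] = 1$, then Lemma~\ref{first.prop}(2) gives $C_G(x) = C_G(y)$, negating (3).

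The argument is short and each implication reduces to a single clause of Lemma~\ref{first.prop}, so there is no substantial obstacle. The only point requiring care is bookkeeping: one must consistently invoke the automatic inclusion $Z(G) \subseteq C_G(x) \cap C_G(y)$ together with the strict inclusion $Z(G) \subsetneq C_G(x)$ valid for any non-central $x$, so that the equalities in (2) and the contradiction in $(2) \Rightarrow (3)$ are set up correctly.
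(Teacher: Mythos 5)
Your proof is correct and follows essentially the same route as the paper's: the same cyclic chain of implications, with $(2)\Rightarrow(3)$ and $(3)\Rightarrow(1)$ argued identically via the strict inclusion $Z(G) < C_G(x)$ and Lemma~\ref{first.prop}(2). The only difference is that for $(1)\Rightarrow(2)$ you apply Lemma~\ref{first.prop}(3) to a hypothetical non-central element of the intersection, whereas the paper identifies $C_G(x)\cap C_G(y)$ with $C_G(\langle x,y\rangle)$ and invokes Lemma~\ref{first.prop}(4); both are valid single-clause appeals to the same lemma.
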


\begin{proof} 
Suppose $[x,y]\neq 1$. Note that $C_G(x) \cap C_G(y) = C_G(\langle x,y\rangle)$. By Lemma \ref{first.prop}(4) if $Z(G) < C_G(\langle x,y\rangle)$ then $C_G(\langle x \rangle )=C_G(\langle x,y\rangle) = C_G(\langle y \rangle)$. Thus $y \in C_G(x)$, a contradiction. Hence we must have $Z(G) = C_G(\langle x,y\rangle) = C_G(x) \cap C_G(y)$, proving that (1) implies (2).
Clearly (2) implies (3), since $Z(G) < Z(G)\langle x\rangle \leq C_G(x)$. Finally, (3) implies (1) by Lemma \ref{first.prop}(2).
\end{proof}

It is clear that the property of being an AC-group is preserved by subgroups:

\begin{lemma}\cite[Remark 3.3(c)]{Rocke}
If $G$ is a finite AC-group and $H\leq G$ then $H$ is a finite AC-group.
\end{lemma}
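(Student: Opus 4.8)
The plan is to verify the defining property of an AC-group directly for the subgroup $H$, and the key observation that drives the entire argument is that a non-central element of $H$ is automatically non-central in $G$. Accordingly, I would fix an arbitrary element $h \in H \setminus Z(H)$ and first show that $h \notin Z(G)$. Indeed, if $h$ were central in $G$, then $h$ would commute with every element of $G$, hence in particular with every element of $H \leq G$, which would force $h \in Z(H)$ — contrary to the choice of $h$. Thus $h \in G \setminus Z(G)$.

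Having established this, the argument concludes quickly. Since $G$ is an AC-group and $h$ is a non-central element of $G$, the centralizer $C_G(h)$ is abelian. Now $C_H(h) = C_G(h) \cap H$ is a subgroup of $C_G(h)$, and a subgroup of an abelian group is abelian, so $C_H(h)$ is abelian. As $h$ was an arbitrary element of $H \setminus Z(H)$, every centralizer in $H$ of a non-central element of $H$ is abelian, which is precisely the statement that $H$ is an AC-group. (If $H$ happens to be abelian there are no such elements $h$ and the condition holds vacuously, so this case needs no separate treatment.) Finiteness of $H$ is of course inherited from that of $G$.

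There is no genuine obstacle here: the entire content of the lemma rests on the one elementary point that non-centrality in $H$ passes up to non-centrality in $G$, which is exactly what licenses the appeal to the abelianity of $C_G(h)$ guaranteed by the AC-property of $G$. Everything else is the routine fact that subgroups of abelian groups are abelian together with the identity $C_H(h) = C_G(h) \cap H$.
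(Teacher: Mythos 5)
Your proof is correct and is the standard argument for this fact; the paper itself gives no proof, simply citing Rocke's Remark 3.3(c). The two key points you isolate --- that $h \in H \setminus Z(H)$ implies $h \in G \setminus Z(G)$, and that $C_H(h) = C_G(h) \cap H$ is a subgroup of the abelian group $C_G(h)$ --- are exactly what is needed, and your handling of the vacuous abelian case is fine.
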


\begin{notation}
If $G$ is a finite non-abelian group, set $$\mathcal{C}(G) = \{C_G(x) \mid x \in G \backslash Z(G)\}.$$ 
\end{notation}

For a finite non-abelian AC-group $G$, we can compute the order of $\mathcal{C}(G)$:

\begin{lemma}\label{clique}
Suppose that $G$ is a finite non-abelian $AC$-group 
Then 
\[|\mathcal{C}(G)| =  -[G \colon Z(G)] + 1 + \sum_{C \in \mathcal{C}(G)}[C \colon Z(G)].\] 
In particular, if $G/Z(G)$ is a $p$-group then $|\mathcal{C}(G)|  \equiv 1 \mod p$.
\end{lemma}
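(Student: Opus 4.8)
The plan is to count the non-central elements of $G$ in two ways, exploiting the partition of $G \backslash Z(G)$ induced by the assignment $x \mapsto C_G(x)$. The central observation is that, for each $C \in \mathcal{C}(G)$, the set of elements $x \in G \backslash Z(G)$ with $C_G(x) = C$ is precisely $C \backslash Z(G)$. First I would verify this fibre description: writing $C = C_G(x)$ for some non-central $x$, if $y \in C \backslash Z(G)$ then $[x,y]=1$, so Lemma \ref{first.prop}(2) forces $C_G(y) = C_G(x) = C$; conversely, if $C_G(y) = C$ then $y \in C_G(y) = C$ while $y \notin Z(G)$, so $y \in C \backslash Z(G)$. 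Hence the non-empty fibres of $x \mapsto C_G(x)$ are exactly the sets $C \backslash Z(G)$ with $C \in \mathcal{C}(G)$, and since each non-central element has a unique centralizer, these sets partition $G \backslash Z(G)$.

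With the partition in hand, counting gives $|G| - |Z(G)| = \sum_{C \in \mathcal{C}(G)} (|C| - |Z(G)|)$. Because $Z(G) \leq C$ for every $C \in \mathcal{C}(G)$, I would divide through by $|Z(G)|$ to obtain $[G \colon Z(G)] - 1 = \sum_{C \in \mathcal{C}(G)} ([C \colon Z(G)] - 1)$. Expanding the right-hand side as $\sum_{C \in \mathcal{C}(G)}[C \colon Z(G)] - |\mathcal{C}(G)|$ and rearranging then yields the claimed identity $|\mathcal{C}(G)| = -[G \colon Z(G)] + 1 + \sum_{C \in \mathcal{C}(G)}[C \colon Z(G)]$. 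For the final assertion, I would note that when $G/Z(G)$ is a $p$-group each quotient $C/Z(G)$ is a non-trivial $p$-group (non-trivial since $C = C_G(x) \geq \langle Z(G), x\rangle > Z(G)$ for non-central $x$), so every index $[C \colon Z(G)]$ and also $[G \colon Z(G)]$ is divisible by $p$; reducing the formula modulo $p$ leaves $|\mathcal{C}(G)| \equiv 1 \pmod p$.

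I do not anticipate any serious obstacle: the whole argument is a clean double-count once the fibre description is established. The one point meriting care is the disjointness of the fibres associated to distinct members of $\mathcal{C}(G)$, which amounts to the implication $[x,y]=1 \Rightarrow C_G(x) = C_G(y)$ for non-central $x,y$. This is exactly the defining feature of an AC-group recorded in Lemma \ref{first.prop}(2) (equivalently, in Corollary \ref{centralizers.intersection}), so invoking it correctly is the only substantive step.
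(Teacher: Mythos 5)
Your proposal is correct and is essentially the paper's own argument: the paper covers $G$ by the centralizers in $\mathcal{C}(G)$, which pairwise intersect in $Z(G)$ by Corollary \ref{centralizers.intersection}, obtaining $|G| = \sum_{C}|C| - |\mathcal{C}(G)||Z(G)| + |Z(G)|$ and dividing by $|Z(G)|$ — the same count your fibre partition of $G\backslash Z(G)$ produces. Your justification of the mod-$p$ claim (each $[C\colon Z(G)]$ and $[G\colon Z(G)]$ is divisible by $p$) is also the intended one, just spelled out more explicitly than in the paper.
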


\begin{proof}
By Corollary \ref{centralizers.intersection} we have $C_G(x) \cap C_G(y) = Z(G)$ whenever $C_G(x) \neq C_G(y)$. Therefore
\[ |G| = \sum_{C \in \mathcal{C}(G)}|C| - |\mathcal{C}(G)||Z(G)| + |Z(G)|\]
that, dividing all by $|Z(G)|$, gives the result.
\end{proof}

In \cite[Satz 5.12]{Schmidt1970} Schmidt classified finite non-abelian solvable AC-groups, also indicating the order of the set $\mathcal{C}(G)$: 

\begin{theorem}\label{classification} Let $G$ be a finite non-abelian solvable AC-group. Then $G$ satisfies one of the following properties:
\begin{enumerate}
    \item $G$ is non-nilpotent and it has an abelian normal subgroup $N$ of prime index; moreover $|\mathcal{C}(G)| = [N \colon Z(G)] + 1$;
 
 \item $G/Z(G)$ is a Frobenius group with Frobenius kernel and complement $F/Z(G)$ and $K/Z(G)$, respectively, and $F$ and $G$ are abelian subgroups of $G$; moreover $|\mathcal{C}(G)| = [F \colon Z(G)] + 1$;
 
 \item $G/Z(G)$ is a Frobenius group with Frobenius kernel and complement $F/Z(G)$ and
$K/Z(G)$, respectively, $K$ is an abelian subgroup of $G$, $Z(F) = Z(G)$, and $F/Z(F)$ is of prime power order; moreover $|\mathcal{C}(G)| = [F \colon Z(G)] + |\mathcal{C}(F)|$;

\item $G/Z(G) \cong \Sym(4)$ and $V$ is a non-abelian subgroup of $G$ such that $V/Z(G)$ is the Klein $4$-group of $G/Z(G)$; moreover $|\mathcal{C}(G)| = 13$;

\item $G = P \times A$ , where $P$ is an AC-subgroup of prime power order and $A$ is an abelian group. 

\end{enumerate}

\end{theorem}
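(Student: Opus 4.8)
The plan is to reduce the statement to the classification of finite groups admitting a nontrivial \emph{partition}, that is, a family of proper subgroups covering the whole group and pairwise intersecting only in the identity. The bridge is the quotient $\overline{G} = G/Z(G)$. Indeed, by Corollary \ref{centralizers.intersection} any two distinct members of $\mathcal{C}(G)$ meet exactly in $Z(G)$, while every element of $G \setminus Z(G)$ lies in its own centralizer; passing to $\overline{G}$, the images $\overline{C}$ with $C \in \mathcal{C}(G)$ are proper nontrivial subgroups that pairwise intersect trivially and cover $\overline{G}$. Since $G$ is non-abelian we have $|\mathcal{C}(G)| \geq 3$ (a group is never the union of two proper subgroups), so this is a genuine nontrivial partition of $\overline{G}$.

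First I would invoke the classification of finite groups possessing a nontrivial partition (Suzuki, Baer, Kegel): such a group is a $p$-group with proper Hughes subgroup, a Frobenius group, one of $\mathrm{PSL}(2,q)$, $\mathrm{PGL}(2,q)$, $\mathrm{Sz}(q)$, or $\mathrm{Sym}(4)$. Because $G$ is solvable, so is $\overline{G}$, and the simple or almost-simple linear and Suzuki families are excluded; thus $\overline{G}$ is a $p$-group, a Frobenius group, or $\mathrm{Sym}(4)$. These three possibilities correspond, respectively, to the nilpotent case (5), the Frobenius cases (1)--(3), and the exceptional case (4). In each case the value $|\mathcal{C}(G)|$ is simply the number of parts, so it can be read off from the partition and matched against the identity of Lemma \ref{clique}.

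The bulk of the work is then lifting the structure of $\overline{G}$ back to $G$. When $\overline{G}$ is a $p$-group I would split $Z(G)$ into its $p$- and $p'$-parts and write $G = P \times A$ with $P$ a Sylow $p$-subgroup (non-abelian, and an AC-group since subgroups of AC-groups are AC-groups) and $A = O_{p'}(Z(G))$ abelian, giving (5). When $\overline{G}$ is Frobenius with kernel $F/Z(G)$ and complement $K/Z(G)$, I would first show $Z(F)=Z(G)$ and, if $K$ is non-abelian, $Z(K)=Z(G)$: for any element that becomes central in $F$ (respectively $K$) but not in $G$, its centralizer would be abelian and contain all of $F$ (respectively $K$), forcing abelianness. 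The key structural claims are that $K$ is always abelian and that, when $F$ is non-abelian, $\overline{F}$ is a $p$-group. The latter follows because $\overline{F}$ is nilpotent (Frobenius kernel) and carries the partition induced by $\mathcal{C}(F)$, and a nilpotent group with a nontrivial partition must be a $p$-group; this yields case (3), with the subcase $F$ abelian giving (2) and the subcase of a complement of prime order recovering (1). The residual case (4) I would treat by hand, pulling back the partition of $\mathrm{Sym}(4)$ to identify the non-abelian preimage $V$ of the Klein four-group and to obtain $|\mathcal{C}(G)| = 13$.

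The main obstacle I anticipate is proving that the complement $K$ is abelian. The subtlety is that the centralizer-partition of $\overline{G}$ need not coincide with the canonical Frobenius partition, so one cannot simply read abelianness off $\overline{G}$; instead one must exploit the AC-condition on $G$ directly. Concretely, if $K$ were non-abelian then $\overline{K}=K/Z(G)$ would be a Frobenius complement carrying its own partition, and one must rule this out: in such a complement two distinct cyclic parts share a nontrivial element whose preimage, by Corollary \ref{centralizers.intersection}, forces the corresponding generators of $K$ to commute in $G$, contradicting the non-commutativity of their images. Making this argument uniform across all admissible complement types, together with the bookkeeping needed to separate cases (1), (2) and (3)---which genuinely overlap---is where the real effort lies.
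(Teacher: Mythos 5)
First, a remark on the comparison itself: the paper offers no proof of this statement --- it is quoted verbatim as Schmidt's Satz 5.12 --- so your proposal is necessarily a different route, namely re-deriving the classification from the Baer--Kegel--Suzuki theorem on finite groups with a nontrivial partition. The bridge you build is sound: by Corollary \ref{centralizers.intersection} the subgroups $C/Z(G)$, $C \in \mathcal{C}(G)$, do form a nontrivial partition of $G/Z(G)$, and this is indeed the classical way such classifications are approached.

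However, there is a concrete gap in your application of the partition classification: you omit the groups of \emph{Hughes--Thompson type} (finite groups $X$ that are not $p$-groups but satisfy $H_p(X) \neq X$ for some prime $p$; by Kegel these are solvable, with $H_p(X)$ nilpotent of index $p$). These are a separate item in the Baer--Kegel--Suzuki list, they survive your solvability reduction, and they genuinely occur here, so your trichotomy ``$\overline{G}$ is a $p$-group, a Frobenius group, or $\mathrm{Sym}(4)$'' is false. A minimal counterexample is $G = D_{24}$, the dihedral group of order $24$: it is a solvable AC-group (the rotation subgroup $C_{12}$ and the six Klein four-subgroups are the maximal abelian centralizers), it is of type (1) with $N \cong C_{12}$ of index $2$ and $|\mathcal{C}(G)| = 7 = [N:Z(G)]+1$, yet $G/Z(G) \cong D_{12}$ has nontrivial centre and hence is not Frobenius, is not a $2$-group, and is not $\mathrm{Sym}(4)$; it is of Hughes--Thompson type with $H_2(D_{12}) = C_6$. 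Consequently your plan recovers case (1) only in the subcase where the prime index is coprime to $[N : Z(G)]$ (when $G/Z(G)$ really is Frobenius with complement of prime order), and groups such as $D_{24}$ fall through the case analysis entirely. The fix is routine --- add the Hughes--Thompson case, observe that the preimage $N$ of $H_p(\overline{G})$ is the unique large partition component, hence equals some $C_G(x)$ and is therefore abelian of prime index, yielding exactly case (1) with the stated count $|\mathcal{C}(G)| = [N:Z(G)]+1$ --- but as written the argument is incomplete. The remaining announced difficulties (abelianness of the complement $K$, the dichotomy for $F$, the $\mathrm{Sym}(4)$ case) are real but are only sketched, so I cannot certify them; the partition of a non-abelian Frobenius complement by centralizers does need the careful treatment you anticipate.
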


\begin{remark}
Note that if $G$ is a finite nilpotent AC -group, then $G$ is of type (5) of Theorem \ref{classification}, that is, $G = P \times A$, where $P \in \Syl_p(G)$ for some prime $p$ and $A$ is an abelian subgroup of $G$, that we can assume to be  of $p'$-order. In particular, $G/Z(G) \cong P/Z(P)$ is a $p$-group, so from Lemma \ref{clique} we deduce that $|\mathcal{C}(G)| \equiv 1 \mod p$, and for all $x \in G\backslash Z(G)$ we have $C_G(x) = C_P(x) \times A$.
\end{remark}

We conclude this section with some properties of AC-groups of prime power order.

\begin{lemma}\label{lem:p-groups}
Let $P$ be a finite non-abelian AC-group and suppose that $P$ is a $p$-group for some prime $p$. Let $x \in Z_2(P) \backslash Z(P)$. Set $[P \colon Z(P)] = p^n$ and $[C_P(x) \colon Z(P)] = p^s$. If $c$ denotes the nilpotency class of $P$, then
\begin{enumerate}
\item $P' \leq C_P(x) \norm P$ (in particular $P'$ is abelian);
\item if $c > 2$ then $C_P(x)$ is the unique normal centralizer of $P$ and its order is maximum among the non-central element-centralizers of $G$ (in particular $Z_2(P) \leq C_P(x) = C_P(Z_2)$);
\item if $[P \colon C_P(x)] \geq p^2$ then $P/Z(P)$ has exponent $p$;
\item if $[P \colon C_P(x)] \geq p^2$ then $c \leq p$ and $Z_i(P) \leq C_P(x)$ for every $1 \leq i \leq c-1$;
\item if $y \in P \backslash C_P(x)$ and $[C_P(y) \colon Z(P)] = p^t$ then $n\geq s+t$ (in particular if $c > 2$ then $n \geq 2t$).
\end{enumerate}

\end{lemma}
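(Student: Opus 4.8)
The common engine for all five parts is the observation that, for any $x\in Z_2(P)$, the map $\lambda_x\colon P\to Z(P)$ given by $g\mapsto[x,g]$ is a homomorphism: its values lie in $[P,Z_2(P)]\le Z(P)$, and $[x,gh]=[x,g][x,h]$ once $[x,g]$ is central. Its kernel is $C_P(x)$. Part (1) is then immediate: $C_P(x)=\ker\lambda_x\norm P$ and $P/C_P(x)\cong\operatorname{im}(\lambda_x)\le Z(P)$ is abelian, so $P'\le C_P(x)$; since $C_P(x)$ is the centralizer of a non-central element of an AC-group it is abelian (indeed a maximal abelian subgroup), whence $P'$ is abelian.

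For part (2) I assume $c>2$. By (1) every $C_P(x')$ with $x'\in Z_2(P)\setminus Z(P)$ is a normal maximal abelian subgroup containing $P'$; if two of them were different they would meet in $Z(P)$ by Corollary~\ref{centralizers.intersection}, forcing $P'\le Z(P)$ and $c\le2$. Hence they all coincide with one subgroup $C$, and then $Z_2(P)\le C=C_P(Z_2(P))$. Uniqueness among all normal centralizers follows because if $C_P(w)\norm P$ with $w\notin Z(P)$, then $C_P(w)/Z(P)$ is a non-trivial normal subgroup of the $p$-group $P/Z(P)$, so it meets the centre $Z_2(P)/Z(P)$; any $u\in(C_P(w)\cap Z_2(P))\setminus Z(P)$ then satisfies $[u,w]=1$ with $u,w$ non-central, so $C_P(w)=C_P(u)=C$ by Lemma~\ref{first.prop}(2). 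For the maximality of $|C|$ I split on the index $[P:C]$. If $[P:C]=p$ then $C$ is a maximal subgroup, and every maximal abelian subgroup, being proper, has order at most $|P|/p=|C|$.

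The remaining case $[P:C]\ge p^2$ is precisely the hypothesis of (3) and (4), and I expect it to be the main obstacle. For (3) I would show $g^p\in Z(P)$ for every $g$ — i.e.\ $\exp(P/Z(P))=p$ — exploiting the identity $\lambda_x(g^p)=[x,g^p]=[x,g]^p$ (valid since $x\in Z_2(P)$) together with the transitivity of commuting in an AC-group (Lemma~\ref{first.prop}(3)), which constrains the $p$-th powers of elements outside $C$. Granting (3), I would obtain (4) by an upward induction along the upper central series: exponent $p$ forces each $Z_i$ with $i\le c-1$ into $C$, and the exponent-$p$ (regular) structure caps the class at $c\le p$. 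Finally, with $Z_{c-1}\le C$ available, any maximal abelian $A\ne C$ satisfies $A\cap Z_{c-1}\le A\cap C=Z(P)$, so $A/Z(P)$ embeds into $P/Z_{c-1}$ and $[A:Z(P)]\le[P:Z_{c-1}]$; comparing this with $[C:Z(P)]\ge[Z_{c-1}:Z(P)]$ finishes the maximality claim, the delicate point being the control of the upper central factors needed to see $[P:Z_{c-1}]\le[Z_{c-1}:Z(P)]$, which is exactly where the exponent-$p$/class-$\le p$ information is used.

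Part (5) is then a short count. For $y\in P\setminus C_P(x)$ we have $C_P(y)\ne C_P(x)$, so $C_P(x)\cap C_P(y)=Z(P)$ by Corollary~\ref{centralizers.intersection}; since the product set $C_P(x)C_P(y)$ lies in $P$,
\[ |C_P(x)|\,|C_P(y)| = |C_P(x)C_P(y)|\,|Z(P)| \le |P|\,|Z(P)|, \]
and dividing by $|Z(P)|^2$ gives $p^{s}p^{t}\le p^{n}$, that is $n\ge s+t$. If in addition $c>2$, then part (2) makes $|C_P(x)|$ maximal among the non-central centralizers, so $p^{t}=[C_P(y):Z(P)]\le[C_P(x):Z(P)]=p^{s}$, i.e.\ $t\le s$, and therefore $n\ge s+t\ge 2t$.
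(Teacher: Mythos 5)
Your arguments for parts (1) and (5) are correct and complete: the homomorphism $\lambda_x\colon g\mapsto[x,g]$ gives (1) at least as cleanly as the paper's observation that $x$ commutes with $P'$, and your count for (5) via $|C_P(x)|\,|C_P(y)|=|C_P(x)C_P(y)|\,|Z(P)|\le|P|\,|Z(P)|$ is valid (and does not even need the normality of $C_P(x)$ that the paper's version of the argument uses). The existence and uniqueness of the normal centralizer in (2), which the paper simply cites from Rocke's Lemma 3.8, you prove directly and correctly: the various $C_P(x')$ for $x'\in Z_2(P)\setminus Z(P)$ must coincide because two distinct ones would intersect in $Z(P)$ and force $P'\le Z(P)$, and any normal centralizer meets $Z_2(P)$ outside $Z(P)$ and hence equals $C$.

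However, there are genuine gaps in (3), (4), and in the maximality claim of (2) in the case $[P:C_P(x)]\ge p^2$. For (3) you only record the identity $[x,g^p]=[x,g]^p$, which tells you when $g^p$ lies in $C_P(x)$, not when it lies in $Z(P)$; no argument is actually given that $\exp(P/Z(P))=p$. Part (4) is conditional on (3) and on an ``upward induction'' whose inductive step is never carried out, and the bound $c\le p$ is asserted rather than derived. Finally, your route to the maximality of $|C_P(x)|$ when $[P:C_P(x)]\ge p^2$ rests on the inequality $[P:Z_{c-1}(P)]\le[Z_{c-1}(P):Z(P)]$, which you yourself flag as the delicate point and do not establish. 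The paper closes exactly these holes by citation rather than proof: (3) is Rocke's Theorem 3.13(b), (4) combines Rocke's Lemmas 3.14(c) and 3.15, and the maximality statement in (2) is Proposition 2.7 of the paper on non-commuting graphs of $p$-groups. If you are permitted to quote those results, your remaining steps go through; as a self-contained argument the proof is incomplete precisely at these three points.
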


\begin{proof}
Let $x \in Z_2(P) \backslash Z(P)$. Then $x$ commutes with $P'=[P,P]$ and so $P' \leq C_P(x)$, implying that $C_P(x) \norm P$ and proving (1).
By \cite[Lemma 3.8]{Rocke} if $c > 2$ then $C_P(x)$ is the unique normal centralizer of $P$ and by \cite[Proposition 2.7]{non.comm.p.groups} the order of $C_P(x)$ is maximal. To complete the proof of part (2), note that this, combined with part (1), implies $C_P(x)=C_P(y)$ for every $x,y \in Z_2(P) \backslash Z(P)$.
Part (3) is \cite[Theorem 3.13(b)]{Rocke}  and (4) is a combination of \cite[Lemma 3.14(c) and Lemma 3.15]{Rocke}.
Finally, since $C_P(x) \cap C_P(y) = Z(G)$ by Corollary \ref{centralizers.intersection} and $C_P(x) \norm P$ by part (1), we deduce that $C_P(y)/Z(P) \cong C_P(x)C_P(y)/C_P(x) \leq P/C_P(x)$. Hence $p^t \leq p^{n-s}$, giving $n \geq s +t$. In particular, if $c >2$ then $s \geq t$ by part (2) and so $n \geq 2t$.  
\end{proof}

\section{Non-commuting graphs of finite AC-groups}
In this section we analyse the properties of finite AC-groups having isomorphic non-commuting graphs.
We begin recalling the definition of non-commuting graph and some considerations on groups having isomorphic non-commuting graphs. 

\begin{definition}
If $G$ is a finite group, then the non-commutative graph of $G$, denoted by $\Gamma_G$, is the graph whose vertices are the elements of $G \backslash Z(G)$ and in which two vertices $x$ and $y$ are adjacent if and only if $xy \neq yx$.
\end{definition}

\begin{remark}
Suppose $G$ and $H$ are finite groups with a  graph-isomorphism $\phi \colon \Gamma_G \rightarrow~\Gamma_H$. First of all, the graphs  $\Gamma_G$ and $ \Gamma_H$ have the same number of vertices, so 

\begin{equation}\label{basic1} |G|-|Z(G)| = |H| - |Z(H)|. \end{equation}
For $x\in G \backslash Z(G)$, set $\Gamma_G(x) = \{y \in G \backslash Z(G) \mid y$ is adjacent to $x\} =G\backslash C_G(x)$. Then  $|\Gamma_G(x)| = |\Gamma_H(\phi(x))|$, implying
\begin{equation}\label{basic2}  |G|-|C_G(x)| = |H| - |C_H(\phi(x))|. \end{equation}
Combining Equations (\ref{basic1}) and (\ref{basic2}) we also obtain
\begin{equation}\label{basic3} |C_G(x)|-|Z(G)| = |C_H(\phi(x))| - |Z(H)|. \end{equation}
Equations (\ref{basic1}), (\ref{basic2}) and  (\ref{basic3}) will be used in the rest of this work without further reference.
\end{remark}

We now focus on the non-commuting graph of finite AC-groups. Note that the property of having abelian centralizers  can be easily read from the non-commuting graph of a finite group $G$. Indeed, $G$ is an AC-group if and only if $\Gamma_G$ is a complete multipartite graph (see Lemma \ref{first.prop}).
In particular, we immediately get the following (with the help of \cite[Lemma 3.1]{AAM} to prove that $H$ is finite):

\begin{lemma}\label{H.AC-group}
Let $G$ be a finite non-abelian AC-group and $H$ be a group such that $\Gamma_G \cong \Gamma_H$. Then $H$ is a finite AC-group. 
\end{lemma}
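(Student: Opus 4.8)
The plan is to reduce the whole statement to a single graph-theoretic invariant: a finite non-abelian group is an AC-group precisely when its non-commuting graph is complete multipartite, and this property is visibly preserved under graph isomorphism. Concretely, I must prove that $H$ is finite, that it is non-abelian, and that its non-commuting graph forces the AC-property.

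First I would make the characterization explicit by unwinding Lemma~\ref{first.prop}. Two non-central elements are non-adjacent in $\Gamma_G$ exactly when they commute, so $\Gamma_G$ is complete multipartite if and only if the relation ``$x=y$ or $[x,y]=1$'' is an equivalence relation on the vertex set $G\setminus Z(G)$. Reflexivity and symmetry are automatic, so the only content is transitivity, which is precisely condition (3) of Lemma~\ref{first.prop}: if a non-central $x$ commutes with both $y$ and $z$, then $[y,z]=1$. Thus $\Gamma_G$ is complete multipartite if and only if $G$ is an AC-group, the parts being the classes $\{\,y\in G\setminus Z(G) : C_G(y)=C_G(x)\,\}$ (well-defined by Lemma~\ref{first.prop}(2)). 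Since $G$ is an AC-group, $\Gamma_G$ is complete multipartite, and because $\Gamma_G\cong\Gamma_H$, the graph $\Gamma_H$ is complete multipartite as well.

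It remains to transfer this back to $H$, for which I need $H$ finite and non-abelian. Non-abelianness is immediate: $G$ is non-abelian, so $\Gamma_G$ has at least one vertex, hence so does $\Gamma_H$, forcing $H\setminus Z(H)\neq\emptyset$. Finiteness is the one genuinely external input: $\Gamma_H$ has exactly $|G|-|Z(G)|$ vertices, so $H\setminus Z(H)$ is finite, and \cite[Lemma 3.1]{AAM} guarantees that $H$ itself is finite. With $H$ now a finite non-abelian group whose non-commuting graph is complete multipartite, the characterization from the previous paragraph---applied to $H$ in place of $G$---yields that $H$ is an AC-group, completing the argument.

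The only real obstacle is the finiteness step, since a priori $Z(H)$ could be infinite. The underlying reason it works is that each non-central element of a non-abelian group has a proper centralizer of finite index (its conjugacy class is finite, being contained in the finite set $H\setminus Z(H)$), so $Z(H)$ has finite index and, having finite complement, must itself be finite. Rather than reprove this, I would simply invoke \cite[Lemma 3.1]{AAM}; everything else is a direct dictionary between condition (3) of Lemma~\ref{first.prop} and the combinatorial notion of a complete multipartite graph.
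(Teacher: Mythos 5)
Your proposal is correct and follows exactly the paper's route: the paper also proves this by observing that a finite non-abelian group is an AC-group if and only if its non-commuting graph is complete multipartite (via Lemma~\ref{first.prop}), transferring this property across the graph isomorphism, and invoking \cite[Lemma 3.1]{AAM} for the finiteness of $H$. Your write-up merely makes explicit the details (transitivity as condition (3), non-abelianness of $H$) that the paper leaves implicit.
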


Recall that the clique number $w(\Gamma)$ of a graph $\Gamma$ is the largest possible number of vertices of a complete subgraph of $\Gamma$. The fact that distinct elements $x$ and $y$ of $G \backslash Z(G)$ are adjacent in $\Gamma_G$ if and only if $C_G(x) \neq C_G(y)$, proves the next result:

\begin{lemma} Let $G$ be a finite AC-group and $H$ be a group such that $\Gamma_G \cong \Gamma_H$. Then
\[ |\mathcal{C}(G)| = w(\Gamma_G)=w(\Gamma_H)=|\mathcal{C}(H)|.\]
\end{lemma}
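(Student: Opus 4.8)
The plan is to establish the chain of equalities by identifying each quantity with a purely combinatorial invariant of the shared graph. The two outer equalities, $|\mathcal{C}(G)| = w(\Gamma_G)$ and $w(\Gamma_H) = |\mathcal{C}(H)|$, are the group-theoretic content; the middle equality $w(\Gamma_G) = w(\Gamma_H)$ is immediate since the clique number is a graph isomorphism invariant and $\Gamma_G \cong \Gamma_H$. Note first that by Lemma~\ref{H.AC-group} the group $H$ is also a finite AC-group, so it suffices to prove the single statement $|\mathcal{C}(G)| = w(\Gamma_G)$ for an arbitrary finite non-abelian AC-group and then apply it to both $G$ and $H$.

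To prove $|\mathcal{C}(G)| = w(\Gamma_G)$, I would argue both inequalities. As recalled just before the statement, two distinct vertices $x, y \in G \backslash Z(G)$ are adjacent in $\Gamma_G$ if and only if $C_G(x) \neq C_G(y)$ (this follows from Corollary~\ref{centralizers.intersection}, since adjacency means $[x,y] \neq 1$). For the inequality $w(\Gamma_G) \geq |\mathcal{C}(G)|$, choose one representative element for each distinct centralizer in $\mathcal{C}(G)$; any two such representatives have different centralizers and hence are adjacent, so this collection of $|\mathcal{C}(G)|$ vertices forms a clique. For the reverse inequality $w(\Gamma_G) \leq |\mathcal{C}(G)|$, take any clique in $\Gamma_G$: its vertices are pairwise adjacent, so by the adjacency criterion they have pairwise distinct centralizers, which gives an injection from the clique into $\mathcal{C}(G)$; hence the clique has at most $|\mathcal{C}(G)|$ vertices. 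Combining the two inequalities yields the equality.

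Assembling the pieces, I would write $|\mathcal{C}(G)| = w(\Gamma_G) = w(\Gamma_H) = |\mathcal{C}(H)|$, where the first and last equalities use the result of the previous paragraph applied to $G$ and to $H$ respectively, and the middle equality is invariance of the clique number under the isomorphism $\Gamma_G \cong \Gamma_H$.

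I do not anticipate a genuine obstacle here: the entire argument rests on the clean biconditional that adjacency in the non-commuting graph of an AC-group corresponds exactly to having distinct centralizers, which is precisely the content of Corollary~\ref{centralizers.intersection}. The only point requiring minor care is ensuring the elements are taken from $G \backslash Z(G)$ (central elements are not vertices), but this is automatic since centralizers in $\mathcal{C}(G)$ are indexed by non-central elements by definition.
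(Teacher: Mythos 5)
Your proof is correct and is essentially the paper's own argument: the paper gives no separate proof, simply noting beforehand that distinct non-central $x,y$ are adjacent in $\Gamma_G$ if and only if $C_G(x)\neq C_G(y)$ and that this fact proves the lemma. You have merely spelled out the two inequalities and the invariance of the clique number under graph isomorphism, which is exactly what the paper leaves implicit.
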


From now on we focus on our main goal, that is understanding the structure of $H$ when $G$ is a nilpotent AC-group.

\begin{lemma}\label{solvable}
Let $G$ be a finite non-abelian nilpotent AC-group and $H$ be a group such that $\Gamma_G \cong \Gamma_H$. Then $H$ is a finite solvable AC-group. 
\end{lemma}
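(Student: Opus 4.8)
By Lemma~\ref{H.AC-group} the group $H$ is a finite AC-group, so the only thing left to prove is that $H$ is solvable. The plan is to assume $H$ is non-solvable and reach a contradiction by comparing the part sizes of the two (complete multipartite) graphs. First I would record the invariant coming from the nilpotency of $G$. Since $G=P\times A$ with $P\in\Syl_p(G)$ non-abelian and $A$ an abelian $p'$-group, every centralizer has the form $C_G(x)=C_P(x)\times A$, so $[C_G(x)\colon Z(G)]=[C_P(x)\colon Z(P)]=p^{s_x}$ with $s_x\geq 1$. As $\Gamma_G$ is complete multipartite with parts $C_G(x)\setminus Z(G)$, each part of $\Gamma_G$ has size
\[ |C_G(x)|-|Z(G)| = |Z(G)|\,(p^{s_x}-1),\qquad s_x\geq 1. \]
By Equation~(\ref{basic3}), the isomorphism $\Gamma_G\cong\Gamma_H$ sends this to a part of $\Gamma_H$ of the same size; hence \emph{every} value $|C_H(y)|-|Z(H)|$ (for $y\in H\setminus Z(H)$) is of the form $|Z(G)|(p^{s}-1)$ with $s\geq 1$, for the fixed prime $p$ and fixed integer $|Z(G)|$ attached to $G$.

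\textbf{Locating two parts in $\Gamma_H$.} Suppose, for contradiction, that $H$ is non-solvable. By the classification of non-solvable finite AC-groups (\cite{Schmidt1970}), $H/Z(H)\cong \PSL(2,q)$ with $q=2^n$ and $n\geq 2$; indeed the odd-characteristic groups $\PSL(2,q)$ and the groups $\mathrm{PGL}(2,q)$ contain non-abelian involution centralizers and are excluded, and the Suzuki groups have non-abelian Sylow $2$-subgroups, so $q$ must be even. In $\PSL(2,2^n)$ the split and non-split maximal tori are self-centralizing cyclic subgroups of orders $q-1$ and $q+1$ (both odd). The crucial point, which sidesteps any Schur-multiplier subtlety, is that these subgroups are \emph{cyclic}: the full preimage $T^\ast\leq H$ of such a torus is a central extension of a cyclic group by $Z(H)$, hence abelian, and its centralizer satisfies $\overline{C_H(T^\ast)}\subseteq C_{H/Z(H)}(\overline{T^\ast})=\overline{T^\ast}$, so $C_H(T^\ast)=T^\ast$ is a maximal abelian subgroup of $H$ with $[T^\ast\colon Z(H)]=q\mp 1$. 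Therefore $\Gamma_H$ has two parts of sizes $|Z(H)|(q-2)$ and $|Z(H)|\,q$.

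\textbf{The arithmetic contradiction.} Combining the two steps, there are $a,b\geq 1$ with $|Z(H)|\,q=|Z(G)|(p^{a}-1)$ and $|Z(H)|(q-2)=|Z(G)|(p^{b}-1)$; since $q>q-2$ we get $a>b$. Taking the ratio and clearing denominators yields $q(p^{b}-1)=(q-2)(p^{a}-1)$, that is $q\,p^{b}=(q-2)p^{a}+2$. Reducing modulo $p^{b}$ (using $a>b$, so $p^{b}\mid p^{a}$) gives $0\equiv 2\pmod{p^{b}}$, whence $p^{b}\mid 2$. If $p$ is odd this is absurd since $p^{b}\geq 3$. If $p=2$ then $b=1$, so $|Z(H)|(q-2)=|Z(G)|$ and $|Z(H)|\,q=|Z(G)|(2^{a}-1)$; dividing forces $q/(q-2)=2^{a}-1$ to be an integer, hence $(q-2)\mid 2$ and $q=4$, giving $2^{a}=3$, again impossible. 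This contradiction shows $H$ is solvable.

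\textbf{Where the difficulty lies.} The elementary divisibility step is the easy part; the two places demanding care are the correct invocation of the non-solvable AC-classification to force $H/Z(H)\cong\PSL(2,2^n)$, and the verification that the maximal tori lift to self-centralizing abelian subgroups of $H$ realizing the indices $q-1$ and $q+1$. I expect the latter to be the main obstacle in a careless treatment, since preimages of maximal abelian subgroups of $H/Z(H)$ need not be abelian (for instance the Klein four subgroup of $A_5\cong\PSL(2,4)$ lifts to the quaternion group in $\SL(2,5)$); the argument deliberately uses only the cyclic tori, for which the lift is automatically abelian, so that no control over the Sylow $2$-subgroup of $H$ is needed.
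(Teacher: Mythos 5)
Your strategy is genuinely different from the paper's: the paper disposes of the non-solvable case in two lines, citing \cite[Proposition 3.14]{AAM} to get $|G|=|H|$ whenever $H$ is a non-solvable AC-group with $\Gamma_G\cong\Gamma_H$, and then invoking Theorem \ref{same.order} to obtain the contradiction. Your direct arithmetic attack could in principle replace that citation, but as written it has a genuine gap at the classification step --- precisely the step you yourself flag as delicate. Schmidt's classification of non-solvable AC-groups does \emph{not} force $H/Z(H)\cong \PSL(2,2^n)$: it gives $H/Z(H)\cong \PSL(2,q)$ or $\mathrm{PGL}(2,q)$ for a prime power $q>3$ (with $H'\cong\SL(2,q)$), and odd $q$ genuinely occurs. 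Your exclusion of odd characteristic rests on the fact that $\PSL(2,q)$ and $\mathrm{PGL}(2,q)$ have non-abelian involution centralizers for odd $q$; but the AC-hypothesis is on $H$, not on $H/Z(H)$, and the AC-property does not descend to central quotients. Concretely, $H=\SL(2,7)$ is a non-solvable AC-group (every centralizer of a non-central element is one of the cyclic groups $C_{14}$, $C_6$, $C_8$), yet $H/Z(H)\cong\PSL(2,7)$ has dihedral involution centralizers and is not isomorphic to any $\PSL(2,2^n)$. So your case analysis omits all central extensions of $\PSL(2,q)$ and $\mathrm{PGL}(2,q)$ with $q$ odd, where the relevant cyclic tori have index $(q\pm1)/2$ or $q\pm 1$ over the centre and the unipotent parts have index $q$ or $2q$. (Your own cautionary example $\SL(2,5)$ is rescued only by the accident $\PSL(2,5)\cong\PSL(2,4)$.)

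The remainder of the argument is sound for $q=2^n$: the lifted cyclic tori are abelian and self-centralizing for the reason you give, the AC-property upgrades $C_H(t)=T^{\ast}$ for $t\in T^{\ast}\setminus Z(H)$, and the divisibility computation forcing $p^{b}\mid 2$ is correct. A variant of the same arithmetic applied to the tori of orders $(q\pm1)/2$ or $q\pm1$ would plausibly close the odd case too, but until those cases are treated the lemma is not proved. Alternatively, you could simply fall back on the paper's route and quote \cite[Proposition 3.14]{AAM} together with Theorem \ref{same.order}.
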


\begin{proof}
The fact that $H$ is a finite AC-group follows from Lemma \ref{H.AC-group}.
Aiming for a contradiction, suppose $H$ is non-solvable. Then \cite[Proposition 3.14]{AAM} gives that $|G|=|H|$. Thus Theorem \ref{same.order} implies that $H$ is nilpotent, a contradiction.
\end{proof}

Lemma \ref{solvable} tells us that we can suppose that $H$ is one of the group of Theorem \ref{classification}. This will be a crucial ingredient in the proof of Theorem \ref{main}.

\section{Proof of the main theorem}
In this section, let $G$ be a finite non-abelian nilpotent AC-group and $H$ be a group such that $\Gamma_G \cong \Gamma_H$. 

\begin{notation}
By Theorem \ref{classification} we can assume $G = P \times A$, where $P \in \Syl_p(G)$ for some prime $p$ and $A$ is an abelian subgroup of $G$ of $p'$-order. Set $p^n = [G \colon Z(G)] = [P \colon Z(P)]$, $p^r = |Z(P)|$ and  $a = |A|$, so $|Z(G)| = |Z(P)||A| = p^ra$ and $|G| = p^{n+r}a$. 
\end{notation}

By Lemma \ref{solvable}, the group $H$ is a finite solvable AC-group, and so it must correspond to one of the groups described in Theorem \ref{classification}. In particular, it is nilpotent if and only if it is of type (5). 
We will therefore analyze the various possibilities for $H$ as listed in Theorem \ref{classification}. As a first step, we show that if $H$ is not nilpotent then it must be of type (3). 

\begin{lemma}\label{type1}
$H$ is not of type (1).
\end{lemma}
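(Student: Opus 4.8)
The plan is to derive a contradiction from assuming $H$ is of type (1). So suppose $H$ has an abelian normal subgroup $N$ of prime index, say $[H \colon N] = \ell$ for a prime $\ell$, and recall from Theorem \ref{classification}(1) that $|\mathcal{C}(H)| = [N \colon Z(H)] + 1$. The key structural fact to exploit is that in a type (1) group the non-central elements split into two very different families: those lying in the abelian normal subgroup $N$, and those lying outside it. Since $N$ is abelian, every non-central $x \in N$ satisfies $C_H(x) = N$ (as $N$ is a maximal abelian subgroup, being of prime index and normal), so all such elements share the single centralizer $N$. The remaining non-central elements lie in the $\ell - 1$ nontrivial cosets of $N$, and their centralizers are the other members of $\mathcal{C}(H)$, each meeting $N$ only in $Z(H)$ by Corollary \ref{centralizers.intersection}.

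First I would set up the counting identities. On the $G$-side we have the clique equality $|\mathcal{C}(G)| = |\mathcal{C}(H)|$, and since $G$ is nilpotent of type (5) the Remark gives $|\mathcal{C}(G)| \equiv 1 \pmod p$. Combining this with $|\mathcal{C}(H)| = [N \colon Z(H)] + 1$ forces $[N \colon Z(H)] \equiv 0 \pmod p$, so $p$ divides $[N \colon Z(H)]$. Next I would use Equation (\ref{basic3}) applied to a vertex $x \in N \backslash Z(H)$ (pulled back through the isomorphism from an element of $G$): its centralizer on the $H$-side is $N$, so $|C_G(y)| - |Z(G)| = |N| - |Z(H)|$ for the corresponding $y \in G \backslash Z(G)$. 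Because every non-central centralizer in $G = P \times A$ has the form $C_P(y) \times A$ with $[C_P(y) \colon Z(P)]$ a power of $p$, this relation pins down $|N|$ in terms of $p$-power data and the invariant $|H| - |Z(H)| = |G| - |Z(G)| = (p^n - 1)p^r a$ from Equation (\ref{basic1}).

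The heart of the argument, and the step I expect to be the main obstacle, is reconciling the arithmetic of $H$'s two-part structure with the rigid $p$-power arithmetic coming from $G$. On one hand, Lemma \ref{clique} applied to $H$ gives $|\mathcal{C}(H)| = -[H \colon Z(H)] + 1 + \sum_{C \in \mathcal{C}(H)} [C \colon Z(H)]$, and the type (1) structure lets me evaluate this sum explicitly: one term is $[N \colon Z(H)]$, and the other $[N \colon Z(H)]$ terms (the non-$N$ centralizers, which correspond to the elements outside $N$) each contribute their index. I would compare this with $|\mathcal{C}(G)| = 1 + \sum_{C \in \mathcal{C}(G)}[C \colon Z(G)] - p^n$ and use that both sums must match term-by-term in size via the degree-preservation of Equation (\ref{basic2}). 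The tension I want to surface is that $H$ being non-nilpotent of type (1) forces $\ell \neq p$ somewhere in the coset structure, producing a centralizer index (or a count of vertices of a given degree) that is coprime to $p$ where the $G$-side demands a multiple of $p$, or vice versa.

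The cleanest route to the contradiction is likely through a degree/partition-size count rather than pure order arithmetic: in the complete multipartite graph $\Gamma_H$, the parts are the sets $C \backslash Z(H)$ for $C \in \mathcal{C}(H)$, and their sizes are the multiset $\{[C \colon Z(H)] - 1 \text{ counted with appropriate multiplicity}\}$; this multiset is a graph invariant and must coincide with the corresponding multiset for $G$. For $G = P \times A$ every part has size $([C_P(y) \colon Z(P)] - 1)\cdot\text{(something involving } a)$ governed by $p$-powers, so the part sizes are heavily constrained modulo $p$. For type (1) $H$, the single large part coming from $N \backslash Z(H)$ has size $[N \colon Z(H)] - 1$, while there are exactly $[N \colon Z(H)]$ parts coming from outside $N$; matching the number of parts of each size against the nilpotent $G$-side, together with the congruence $|\mathcal{C}(H)| \equiv 1 \pmod p$, should force an impossible divisibility. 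The delicate point is keeping careful track of how the factor $A$ (equivalently the invariant $|Z(G)|$ versus $|Z(H)|$, which need not be equal at this stage) rescales the part sizes, so that the modulo-$p$ obstruction is not accidentally washed out; this is where I anticipate the real work lies, and where I would invoke Lemma \ref{lem:p-groups}(5) to bound the possible $p$-power indices $[C_P(y) \colon Z(P)]$ and thereby limit the candidate part sizes on the $G$-side.
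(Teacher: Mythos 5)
Your proposal correctly sets up the type (1) structure (every non-central element of $N$ has centralizer $N$, elements outside $N$ have centralizers meeting $N$ in $Z(H)$) and correctly establishes the first step, namely that $p$ divides $[N \colon Z(H)]$ via the clique-number count. But from that point on the argument is a plan rather than a proof: phrases like ``should force an impossible divisibility'' and ``this is where I anticipate the real work lies'' flag exactly the steps that are never carried out. The paper's proof needs a chain of four claims, of which you only prove the first. The missing ones are: (i) $p^r$ is the \emph{exact} $p$-part of $|Z(H)|$ (from $|N|-|Z(H)| = |M|-|Z(G)|$); (ii) the prime index $[H\colon N]=q$ must equal $p$ (from $|N|-|C_H(h)| = |M|-|C_G(g)|$ and a divisibility argument using (i)); and (iii) $p^t = [M\colon Z(G)]$ divides $[N\colon Z(H)]$ (from $|H|-|N|=|G|-|M|$). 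Notably, your guess about where the tension lies is backwards: you expect the contradiction to come from $\ell \neq p$ ``somewhere in the coset structure,'' whereas the paper \emph{proves} $\ell = p$ and then must still dispose of that case.

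The second, more fundamental gap is that you never identify the actual mechanism of the contradiction. There is no direct combinatorial impossibility in matching part sizes; instead, the paper pushes the divisibility constraints until they force $u=1$, i.e.\ $|C_G(g)| = p|Z(G)|$, whence $|Z(H)| = |Z(G)|$ and therefore $|G| = |H|$ by Equation (\ref{basic1}); at that point Theorem \ref{same.order} says $H$ is nilpotent, contradicting the non-nilpotency of type (1) groups. Your proposal never invokes Theorem \ref{same.order} and never contemplates concluding via $|G|=|H|$, so even if the partition-size bookkeeping you describe were completed, it is not clear it would terminate in a contradiction. (Your suggested use of Lemma \ref{lem:p-groups}(5) to bound the part sizes on the $G$-side is not needed and does not substitute for the missing arithmetic.) As written, the proposal does not constitute a proof of the lemma.
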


\begin{proof}
 Let $q$ be a prime such that $[H \colon N] = q$.
Take $n\in N \backslash Z(H)$ and $h \in H \backslash N$. Since $H$ is a non-nilpotent $AC$-group, we get $N = C_H(n)$ and $C_H(h) = Z(H)\langle h \rangle$, with $|C_H(h)| = q|Z(H)|$.

Aiming for a contradiction, suppose that there exists an isomorphism of graphs $\Phi: \Gamma_H \rightarrow \Gamma_G$. Let $m = \phi(n)$ and $g = \phi(h)$ and set $M= C_G(m)$, $[M \colon Z(G)] = p^t$ and $[ C_G(g) \colon Z(G)] = p^u$. 
\bigskip

\begin{description}
\item[Claim 1] $p$ divides $[N \colon Z(H)]$.

\smallskip
\noindent
Proof: Note that $G/Z(G) \cong P/Z(P)$ is a $p$-group. Using Lemma \ref{clique} we get 
\[[N \colon \Z(H)] +1 = w(\Gamma_H) = w(\Gamma_G)  \equiv 1 \mod p\]
Hence $[N \colon \Z(H)]\equiv 0 \mod p$, that is, $p$ divides  $[N \colon Z(H)]$.

\bigskip
\item[Claim 2] $p^r$ is the largest power of $p$ dividing $|Z(H)|$.

\smallskip
\noindent
Proof: Since $\Gamma_H \cong \Gamma_G$ we get
\[ |N| - |Z(H)| = |M| - |Z(G)| \] 
\[|Z(H)|([N \colon Z(H)] - 1) = p^ra(p^t - 1)\]
Since $p$ divides $[N \colon Z(H)]$ by Claim 1 and $(p,a)=1$ by assumption, we deduce that $p^r$ divides $|Z(H)|$ and it is the largest power of $p$ dividing it.

\bigskip
\item[Claim 3] $p = q$.

\smallskip
\noindent
Proof: Let $Q \in \Syl_q(Z(H))$ (possibly $|Q|=1$). 
Since $\Gamma_H \cong \Gamma_G$ we get
\[ |N| - |C_H(h)| = |M| - |C_G(g)|\]
\[|Q|([N \colon Q] - [Z(H) \colon Q]q) = p^{r+t}a - p^{r+u}a\]
Note that $p^{r+1}$ divides $p^{r+t}a - p^{r+u}a$. If $p\neq q$, then  $p^{r+1}$ must  divide $([N \colon Q] - [Z(H) \colon Q]q)$. By Claims $1$ and $2$, we have that $p^{r+1}$ divides $[N \colon Z(H)][Z(H) \colon Q] = [N\colon Q]$. Hence we deduce that $p^{r+1}$ divides $[Z(H) \colon Q]q$ and so it must divide $|Z(H)|$, contradicting Claim $2$. Thus we deduce that $p=q$. 

\bigskip
\item[Claim 4] $p^t$ divides $[N \colon Z(H)]$.

\smallskip
\noindent
Proof: Since $\Gamma_H \cong \Gamma_G$ and $p=q$ by Claim $3$, we get
\[ |H| - |N| = |G| - |M|\]
\[[N \colon Z(H)]|Z(H)|(p - 1) = p^rp^ta ([G \colon M] - 1)\]
By Claim $2$ we deduce that $p^t$ divides $[N \colon Z(H)]$. 
\end{description}

We are now ready to reach a contradiction.
Since $p^t$ divides $[N \colon Z(H)]$ and $|C_H(h)|=p|Z(H)|$, we deduce that $|N| \geq |C_H(h)|$. 
 Recall that 
\begin{equation}\label{type1eq} |N| - |C_H(h)| = |M| - |C_G(g)|.\end{equation}

In particular  $|M| \geq |C_G(g)|$, so $p^t \geq p^u$ and Claim $4$ implies that $p^u$ divides $[N \colon Z(H)]$. Recalling that $p=q$, from Equation (\ref{type1eq}) we also get
\[ |Z(H)|([N \colon Z(H)] - p) = ap^rp^u(p^{t-u} - 1).\]
Since $p^r$ is the largest power of $p$ dividing $|Z(H)|$, we conclude that $p^u$ divides $([N \colon Z(H)] - p)$ and so $p^u$ divides $p$. Hence $u=1$, that is, $|C_G(g)| = p|Z(G)|$. Now
\[ |C_H(h)| - |Z(H)| = |C_G(g)| - |Z(G)| \] 
\[ |Z(H)|(p-1) = |Z(G)|(p-1) \]
implying $|Z(H)| = |Z(G)|$ and so $|H| = |G|$.
Therefore by Theorem \ref{same.order}  the group $H$ is nilpotent, a contradiction.

\end{proof}

\begin{lemma}\label{type2.4}
$H$ is not of type (2) or (4).
\end{lemma}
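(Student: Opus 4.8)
The plan is to rule out types (2) and (4) using the same arithmetic strategy that worked for Lemma~\ref{type1}, namely exploiting the congruence $w(\Gamma_G) \equiv 1 \bmod p$ coming from Lemma~\ref{clique} together with the order equations (\ref{basic1})--(\ref{basic3}). Since $w(\Gamma_G) = w(\Gamma_H) = |\mathcal{C}(H)|$ and Theorem~\ref{classification} records $|\mathcal{C}(H)|$ explicitly for each type, the congruence pins down a divisibility condition on the relevant index.

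\begin{proof}
Suppose first that $H$ is of type (2), so that $H/Z(H)$ is a Frobenius group with kernel $F/Z(H)$ and complement $K/Z(H)$, with $F$ abelian, and $|\mathcal{C}(H)| = [F\colon Z(H)]+1$. Let $\Phi\colon \Gamma_H \to \Gamma_G$ be a graph isomorphism. Since $G/Z(G)\cong P/Z(P)$ is a $p$-group, Lemma~\ref{clique} gives $w(\Gamma_G)\equiv 1\bmod p$, and since $w(\Gamma_G)=w(\Gamma_H)=|\mathcal{C}(H)|=[F\colon Z(H)]+1$, we deduce that $p$ divides $[F\colon Z(H)]$. The first step is then to transport the structure of $H$ to $G$ through $\Phi$: take $f\in F\backslash Z(H)$ and $k\in K\backslash Z(H)$, and use that $C_H(f)=F$ and $C_H(k)=K$ (both abelian, as $F$ and $K$ are abelian) to read off, via Equation~(\ref{basic3}), the orders of $C_G(\Phi(f))$ and $C_G(\Phi(k))$ relative to $|Z(G)|$. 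Writing $[C_G(\Phi(f))\colon Z(G)]=p^{t}$ and using $|F|-|Z(H)|=p^{t}|Z(G)|-|Z(G)|$, one argues exactly as in Claims~2--4 of Lemma~\ref{type1} that $p^{r}$ is the exact power of $p$ in $|Z(H)|$ and that the prime dividing $[H\colon F]$ must equal $p$. The key extra ingredient for type (2) is the Frobenius condition: in $H/Z(H)$ the complement has order coprime to the kernel, so the prime dividing $[H\colon F]=[K\colon Z(H)]$ is distinct from $p$, contradicting $p=q$. This should close type (2).

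For type (4), where $G/Z(G)\cong\Sym(4)$ would have to be $H/Z(H)\cong\Sym(4)$ with $|\mathcal{C}(H)|=13$, the argument is shorter: the congruence $w(\Gamma_G)\equiv 1\bmod p$ forces $13\equiv 1\bmod p$, so $p$ divides $12=2^2\cdot 3$, giving $p\in\{2,3\}$. One then examines the three centralizer sizes occurring in $\Sym(4)$ (coming from elements of order $2$, $3$, and $4$ in the quotient), producing centralizers $C_H(x)$ with $[C_H(x)\colon Z(H)]\in\{2,3,4\}$. Transporting these through $\Phi$ via Equation~(\ref{basic3}) yields centralizers in $G$ with three distinct indices over $Z(G)$, each a power of $p$, and the incompatibility of the values $\{2,3,4\}$ with all of them being powers of a single prime $p\in\{2,3\}$ (together with the total order constraint from Equation~(\ref{basic1})) should yield the contradiction. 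I would verify the arithmetic by computing both sides of Equations~(\ref{basic1}) and~(\ref{basic3}) for the element whose centralizer index is coprime to $p$.

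The main obstacle I anticipate is the bookkeeping in type (2): unlike type (1), there is no single abelian normal subgroup of prime index, so one must track the sizes $[F\colon Z(H)]=q^{\,\ast}$ of the kernel and $[K\colon Z(H)]$ of the complement simultaneously and be careful that the Frobenius coprimality is applied correctly after establishing $p=q$. The delicate point is ensuring that the divisibility chain analogous to Claim~3 does not accidentally permit $p$ to divide both the kernel and complement indices; this is precisely where the Frobenius structure (rather than mere solvability) is essential and where I expect to spend the most care.
\end{proof}
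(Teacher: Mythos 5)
Your proposal diverges entirely from the paper's argument, which is a two-line reduction: by \cite[Lemmas 3.11 and 3.12]{AAM}, a graph isomorphism $\Gamma_G \cong \Gamma_H$ with $H$ of type (2) or (4) already forces $|G|=|H|$, and then Theorem \ref{same.order} makes $H$ nilpotent, a contradiction. Your plan instead tries to rerun the divisibility analysis of Lemma \ref{type1} from scratch, and as written it has genuine gaps in both cases.

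For type (2), the assertion that ``one argues exactly as in Claims 2--4 of Lemma \ref{type1}'' does not survive inspection. Those claims lean heavily on the fact that in type (1) there is an abelian normal subgroup $N$ of \emph{prime} index $q$ and every element outside $N$ has centralizer of order exactly $q|Z(H)|$; this is what lets Claim 3 conclude $p=q$ and Claim 4 extract $p^t \mid [N\colon Z(H)]$. In type (2) the complement index $c=[K\colon Z(H)]$ is merely a divisor of $[F\colon Z(H)]-1$ (in particular coprime to $[F\colon Z(H)]$, hence coprime to $p$ once you know $p\mid [F\colon Z(H)]$), and the kernel $F/Z(H)$ need not have prime-power order. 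So the quantity you would need to show is divisible by $p$ --- namely $[K\colon Z(H)]$ --- is in fact automatically \emph{coprime} to $p$ by the Frobenius condition, and the intended contradiction evaporates; you would have to find a contradiction elsewhere, and your sketch does not indicate where. For type (4), the claimed ``incompatibility of the values $\{2,3,4\}$ with all of them being powers of a single prime'' rests on a misreading of Equation (\ref{basic3}): that equation says $|Z(H)|([C_H(x)\colon Z(H)]-1) = |Z(G)|(p^{s}-1)$, so the centralizer indices in $H$ are not required to be powers of $p$ --- only those in $G$ are, which holds automatically. No contradiction follows from the index values alone without further arithmetic on $|Z(G)|$ versus $|Z(H)|$, which you have not carried out. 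In short, both branches of the proposal are programmes rather than proofs, and the natural fix is the paper's: quote the results of \cite{AAM} that give $|G|=|H|$ in these two types and invoke Theorem \ref{same.order}.
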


\begin{proof}
Aiming for a contradiction, suppose $H$ is of type (2) or (4). Then by \cite[Lemma 3.11 and 3.12]{AAM} we have $|G|=|H|$ and so by Theorem \ref{same.order} we deduce that $H$ is nilpotent, a contradiction.
\end{proof}

Note that Lemmas \ref{type1} and \ref{type2.4} imply that either $H$ is nilpotent or it is as described in part (3) of Theorem \ref{classification}.
In order to prove Theorem \ref{main} we must show that in the latter case we should have $|Z(G)| < |Z(H)|$, with some extra properties. 

\begin{theorem}\label{exception} 
Suppose $H$ is of type (3) with $[F \colon Z(H)] = q^f$, where $F/Z(H)$ is the kernel of $H/Z(H)$, $q$ is a prime and $f\geq 1$ is an integer. Then
\begin{enumerate}
\item $p \neq q$, that is, the Sylow $q$-subgroups of $G$ are abelian (possibly trivial);
\item $|Z(H)| > |Z(G)|$;
\item $[P \colon Z(P)] > p^4$;
\item none of the maximal subgroups of $G$ is abelian (in particular $P$ does not have maximal nilpotency class). 
\end{enumerate}
\end{theorem}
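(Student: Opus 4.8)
The plan is to convert the graph isomorphism into a size‑preserving matching of the parts of the two complete multipartite graphs, to isolate a single numerical identity coming from the block of $q^f$ equal‑sized parts produced by the conjugates of $K$, and then to play divisibility and conjugacy counting against the structure of $P$.

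\textbf{Setup and the key identity.} Write $\phi\colon \Gamma_G\to\Gamma_H$ for the isomorphism and $|Z(H)|=p^bc$ with $\gcd(c,p)=1$; recall $|Z(G)|=p^ra$ and $[G\colon Z(G)]=p^n$. For $x\in G\setminus Z(G)$ we have $C_G(x)=C_P(x)\times A$, so $[C_G(x)\colon Z(G)]=p^s$ is a power of $p$, and Equation~(\ref{basic3}) reads $|C_H(\phi(x))|-|Z(H)|=|Z(G)|(p^s-1)$. In $H$ (type (3)) the centralizers of non-central elements are the $q^f$ conjugates of the abelian complement $K$, each of index $m:=[K\colon Z(H)]$ with $m\mid q^f-1$ (so $\gcd(m,q)=1$) and $m\ge 2$, together with the centralizers inside the kernel $F$, each of index a power of $q$ over $Z(H)$ (as $F/Z(F)=F/Z(H)$ is a $q$-group). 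Since a part-size determines the centralizer index inside each group, and since $m$ is coprime to $q$ while every kernel-index is a power of $q$, the size $|Z(H)|(m-1)$ is \emph{not} a kernel-index size, so the $q^f$ equal parts coming from conjugates of $K$ match exactly $q^f$ parts of $G$ sharing a single index $p^{s^*}$. Writing $n_{s}$ for the number of members of $\mathcal{C}(G)$ of index $p^{s}$, this gives $n_{s^*}=q^f$ together with $|Z(G)|(p^{s^*}-1)=|Z(H)|(m-1)$, while Equation~(\ref{basic1}) gives $|Z(G)|(p^n-1)=|Z(H)|(q^fm-1)$.

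\textbf{Part (1).} I would argue by contradiction: suppose $p=q$. Comparing $p$-parts in $|Z(G)|(p^n-1)=|Z(H)|(q^fm-1)$ shows the $p$-part of $|Z(H)|$ is $p^r$, so dividing the two boxed identities by $p^r$ gives $a(p^{s^*}-1)=c(m-1)$ and $a(p^n-1)=c(p^fm-1)$. Reducing these modulo $p$ (using $n,s^*,f\ge 1$) yields $a\equiv c$ and $-a\equiv c(m-1)\pmod p$; substituting the first into the second gives $0\equiv am\pmod p$, contradicting $\gcd(a,p)=\gcd(m,p)=1$. Hence $p\neq q$, and the Sylow $q$-subgroups of $G$, lying in the abelian factor $A$, are abelian (possibly trivial).

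\textbf{Parts (2)--(4).} From the $K$-matching identity, $|Z(H)|>|Z(G)|$ is \emph{equivalent} to $m<p^{s^*}$, equivalently to $[H\colon Z(H)]=q^fm<p^n=[G\colon Z(G)]$; the borderline case $|Z(H)|=|Z(G)|$ forces $|G|=|H|$, so $H$ is nilpotent by Theorem~\ref{same.order}, contradicting that $H$ is of the non-nilpotent type~(3). To exclude $|Z(H)|<|Z(G)|$ and to obtain (3)--(4) I would exploit $n_{s^*}=q^f$ structurally. The centralizers of $P$ of a fixed index form a union of $P$-conjugacy classes of $p$-power size, and $q^f\not\equiv 0\pmod p$, so there is a \emph{normal} centralizer of index $p^{s^*}$ in $P$; by Lemma~\ref{lem:p-groups} this pins down $p^{s^*}$ (for class $c>2$ it must be the maximal index $C_P(Z_2)$) and forces congruences such as $q^f\equiv 1\pmod p$. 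Feeding these back through Lemma~\ref{lem:p-groups} is meant to rule out small $[P\colon Z(P)]$, giving $[P\colon Z(P)]>p^4$, and to rule out an abelian maximal subgroup of $P$ (which would be a normal centralizer of maximal index $p^{n-1}$, over-determining the centralizer profile), giving~(4); in particular $P$ is not of maximal class.

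\textbf{Main obstacle.} The order identities above are individually consistent with $|Z(H)|\le|Z(G)|$, so the strict inequality in~(2) cannot be read off from them. The real difficulty is the combinatorial-structural step that uses the precise number and indices of the maximal abelian subgroups of an AC $p$-group, together with $n_{s^*}=q^f$, to force $m<p^{s^*}$ and to exclude the small-order and abelian-maximal configurations in (3)--(4); this is exactly where the fine structure theory of AC $p$-groups (Lemma~\ref{lem:p-groups} and Rocke's classification) is indispensable.
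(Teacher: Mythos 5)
Your setup (the part-size matching, the identity $|Z(G)|(p^{s^*}-1)=|Z(H)|(m-1)$ for the conjugates of $K$, and Equation~(\ref{basic1}) in the form $|Z(G)|(p^n-1)=|Z(H)|(q^fm-1)$) is sound, and your proof of part (1) is correct: comparing $p$-parts when $p=q$ and reducing modulo $p$ does yield $m\equiv 0\pmod p$, contradicting $m\mid q^f-1$. This is a valid variant of the paper's Claim~1, which instead extracts $u=m+r$ and $u=r$ from $|H|-|K|=|G|-|M|$ and $|C_H(x_i)|-|Z(H)|=|C_G(y_i)|-|Z(G)|$.

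For parts (2)--(4), however, there is a genuine gap, and you acknowledge it yourself: you only rule out the borderline case $|Z(H)|=|Z(G)|$, and your plan for excluding $|Z(H)|<|Z(G)|$ and for deriving (3)--(4) (via the count $n_{s^*}=q^f$, normal centralizers, and Lemma~\ref{lem:p-groups}) is never executed. Moreover, your diagnosis that ``the strict inequality in (2) cannot be read off'' from the order identities is mistaken: the paper obtains (2) purely arithmetically. Eliminating $|Z(H)|$ between your two displayed identities (equivalently, between the paper's Equations~(\ref{eq2}) and~(\ref{eq4})) gives
\[
p^{m}(p^{n-m}-1)(c-1)=(p^{m}-1)c(q^f-1)
\]
in the paper's notation (your $m$ is the paper's $c$, your $s^*$ is the paper's $m$). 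Using only $c\mid q^f-1$ and $(p,c)=1$ (the latter established first, in Claim~3, from $|K|-|C_H(x_i)|=|M|-|C_G(y_i)|$), this identity forces $c<q^f-1$, then $n>2m$, then $c<p^m<q^f<p^{n-m}$, and $c<p^m$ plugged back into $|Z(H)|(c-1)=|Z(G)|(p^m-1)$ gives $|Z(H)|>|Z(G)|$. Parts (3) and (4) then follow with no structural input from Rocke's theory at all: an analogous computation for each kernel part gives $n>2s_i$, and since $m\neq s_i$ there are at least two distinct exponents among $\{m,s_1,\dots,s_v\}$, each less than $n/2$, whence $n>4$ and no centralizer has index $p$ in $G$, i.e.\ no maximal subgroup of $G$ is abelian. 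So the missing ingredient in your write-up is not the fine structure of AC $p$-groups but the elimination step producing the displayed identity and the divisibility analysis of it.
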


\begin{proof}
Set $|Z(H)| = bq^u$ for some integers $b, u \geq 1$ with $(b,q)=1$, so $|F| = bq^{u+f}$.  
Let $K/Z(H)$ denote a Frobenius complement of the group $H/Z(H)$. Then 
$|K| = [K \colon Z(H)]|Z(H)| = cbq^u$, for some integer $c\geq 2$ such that  $c$ divides $q^f -1$ (and so in particular $(c,q) = 1$). 
Take $k \in K \backslash Z(H)$, so $K = C_H(k)$, and $x_1, \dots, x_v \in F \backslash Z(H)$, one for each size of $H$-conjugacy classes, so $C_H(x_i) = C_F(x_i)$. In particular 
\[|C_H(k)| = |K| = c|Z(H)| \quad \text{ and } \quad |C_H(x_i)| = q^{t_i}|Z(H)|\] for distinct integers $t_i\geq 1$. Note that from $(c,q)=1$ we see that $|C_H(k)| \neq |C_H(x_i)|$.

Aiming for a contradiction, suppose there exists an isomorphism of graphs
$\phi \colon \Gamma_H \rightarrow \Gamma_G$. Let $g= \Phi(k)$, $M=C_G(g)$ and $y_i=\phi(x_i)$ for every $1 \leq i \leq v$. Note that $G/Z(G)$ is a $p$-group, so we can set $[M \colon Z(G)] = p^m$  for some integer $m\geq 1$ and $[C_G(y_i) \colon Z(G)]=p^{s_i}$ for some distinct integers $s_i\geq 1$.
Also, $|K| - |C_H(x_i)| = |M| - |C_G(y_i)|$, so $|M| \neq |C_G(y_i)|$, that is, $m \neq s_i$ for every $i\geq 1$.

\begin{description}
\item[Claim 1] $p \neq q$, so part (1) holds.

\smallskip
\noindent
Proof: 
Since $\Gamma_H \cong \Gamma_G$ we have
\begin{equation}\label{eq2}|H| - |K| = |G| - |M| \Longrightarrow
q^ubc(q^f- 1) = p^{m+r}a(p^{n-m}- 1)\end{equation}

\vspace{0.2cm}
and
\begin{equation}\label{eq3} |C_H(x_i)| - |Z(H)| = |C_G(y_i)| - |Z(G)| \Longrightarrow
q^ub(q^{t_i} - 1) = p^{r}a(p^{s_i}- 1) .\end{equation}

Aiming for a contradiction, suppose $p=q$. Then $(p,b)=(p,c)=(p,a)=1$. From Equation (\ref{eq2}) we deduce that $p^ u = p^{m+r}$ and  from Equation (\ref{eq3}) we get $p^ u = p^{r}$, a contradiction. Therefore $p\neq q$.

\bigskip
\item[Claim 2] for every $d\leq r$ we have that $p^d$ divides $bc$ if and only if $p^d$ divides $b$.

\smallskip
\noindent
Proof: 
By assumption $p^d$ divides $p^r$ and by Claim $1$ we have $p\neq q$. We conclude by the following equality:
\begin{equation}\label{eq4} |K| - |Z(H)| = |M| - |Z(G)| 
\Longrightarrow  q^u( bc - b ) = p^ra(p^m - 1)\end{equation}

\bigskip
\item[Claim 3] $p^{r+1}$ does not divide $bc$ and $q^{u+1}$ does not divide $a$. In particular $(p,c)=1$ and so $(p,[H \colon Z(H)])=1$.

\smallskip
\noindent
Proof:  Recall that $|K| - |C_H(x_i)| \neq 0 $ since $c$ does not divide $q$,  and so  $|M| \neq |C_g(y)|$. Note that
\begin{equation}\label{eq5} |K| - |C_H(x_i)| = |M| - |C_G(y_i)| \Longrightarrow 
q^u(bc - bq^{t_i}) = p^ra(p^m - p^{s_i}). \end{equation}

From Claim 1 we know $p\neq q$ and by Equation (\ref{eq5}) we get that $p^{r+1}$ divides $(bc - bq^{t_i})$. 
Aiming for a contradiction, suppose $p^{r+1}$ divides $bc$. Then $p^{r+1}$ must divide $b$ and by Equation (\ref{eq4}) we deduce that $p^{r+1}$ divides $p^ra(p^m-1)$, a contradiction. Therefore $p^{r+1}$ does not divide $bc$.

Similarly, if $q^{u+1}$ divides $a$,  then Equation (\ref{eq5}) implies that $q$ divides $bc$, reaching again a contradiction.

Finally, if $p$ divides $c$, then $p$ does not divide $(c-1)$ and by Equation (\ref{eq4}) we deduce that $p^r$ divides $b$. But then $p^{r+1}$ divides $bc$, a contradiction.

\bigskip
\item[Claim 4] We have
\begin{equation}\label{eqC} p^{m}(p^{n-m}- 1)(c - 1)= (p^{m}- 1)c(q^f - 1).\end{equation} In particular 
\begin{enumerate}
\item $n > 2m$;
\item $c < p^m < q^f < p^{n-m}$; and
\item $|Z(H)| > |Z(G)|$, so part (2) holds.
\end{enumerate}

\smallskip
 \noindent
Proof: From Equation (\ref{eq2}) we get
\[ |Z(H)| = q^ub = \frac{p^{r+m}a(p^{n-m}- 1)}{c(q^f - 1)}.\]
Substituting this into Equation (\ref{eq4}) we obtain
$$ p^{m}(p^{n-m}- 1)(c - 1)= (p^{m}- 1)c(q^f - 1),$$
as wanted.

Recall that $c$ divides $q^f-1$, so $c \leq q^f-1$. Aiming for a contradiction, suppose $c=q^f-1$. Then Equation (\ref{eqC}) becomes
\[ p^{m}(p^{n-m}- 1)(c - 1)= (p^{m}- 1)c^2 , \]
that we can rewrite as
\[(p^n- p^m(1+c) +c)c = p^{m}(p^{n-m}- 1). \]

In particular $p^m$ divides $(p^n- p^m(1+c) +c)c $. However, $p^m$ divides $p^n$ while $(p,c)=1$ by Claim 3, and we reach a contradiction. Thus $c<q^f-1$.

As a consequence, we have $c-1 \leq q^f-1$ and from Equation (\ref{eqC}) we deduce that we must have $p^{m}(p^{n-m}- 1) \geq (p^{m}- 1)c$ and so 
\[c \leq \frac{p^{m}(p^{n-m}- 1)}{ (p^{m}- 1)}. \]

Rewrite Equation (\ref{eqC}) as
$$(p^{n-m}- 1)(p^mc - p^m)= (p^{m}c- c)(q^f - 1)$$
and notice that
$$c > p^m \Leftrightarrow p^mc - p^m > p^mc - c \Leftrightarrow p^{n-m}-1 < q^f-1
\Leftrightarrow p^{n-m} < q^f.$$  

Aiming for a contradiction suppose $n \leq 2m$. Then $p^{n-m} \leq p^m$ and by Equation (\ref{eqC}) $p^m$ divides $q^f-1$, so $p^{n-m} \leq p^m \leq q^f-1$.
Hence by what we noted above we obtain $c \geq p^m$. On the other hand, $c \leq \frac{p^{m}(p^{n-m}- 1)}{ (p^{m}- 1)} \leq p^m$. Thus the only option is $c=p^m$, contradicting Claim 3. Therefore $n > 2m$.

Now, if $c > p^m$ then $q^f > p^{n-m} > p^m$ contradicting the fact that by Equation (\ref{eqC}) $p^m$ divides $q^f-1$. Therefore $c < p^m < q^f < p^{n-m}$. Finally, from Equation (\ref{eq4}) we conclude that $|Z(H)| > |Z(G)|$.

\bigskip
\item[Claim 5] for every $i\geq 1$ we have $n > 2s_i$.

\smallskip
\noindent
Proof: 
Note that, using once again the fact that $\Gamma_H \cong \Gamma_G$, we have

\begin{equation}\label{eq6} |H| - |C_H(x_i)| = |G| - |C_G(y_i)| \Longrightarrow 
q^{u+t_i}b(q^{f-t_i}c - 1) = p^{r+s_i}a(p^{n-s_i} - 1). \end{equation}

From Equation (\ref{eq6}) we get
\[ |Z(H)| = q^ub = \frac{p^{r+s_i}a(p^{n-s_i}- 1)}{q^{t_i}(q^{f-t_i}c - 1)}.\]
Substituting this into Equation (\ref{eq1}) we obtain
\[ \frac{p^{r+s_i}a(p^{n-s_i}- 1)}{q^{t_i}(q^{f-t_i}c - 1)}\cdot (q^fc - 1)= p^{r}a(p^{n}- 1) \]
and so 
\[ p^{s_i}(p^{n-s_i}- 1)(q^fc - 1) = (p^{n}- 1)q^{t_i}(q^{f-t_i}c - 1). \]
By Claim 1, $p\neq q$. So we deduce that $p^{s_i}$ divides $q^{f-t_i}c - 1$. In particular, 
\begin{equation}\label{last}
p^{s_i} \leq q^{f-t_i}c - 1 \leq q^{f-t_i}c.
\end{equation}

Since $\Gamma_H \cong \Gamma_G$, we also obtain
\begin{equation}\label{eq1} |H| - |Z(H)| = |G| - |Z(G)| \Longrightarrow 
q^{u}b(q^fc - 1) = p^{r}a(p^{n} - 1). \end{equation}
From Equation (\ref{eq3}) we get
\[ |Z(H)| = q^ub = \frac{p^{r}a(p^{s_i}- 1)}{q^{t_i} - 1}.\]
Substituting this into Equation (\ref{eq1}) we obtain
\[ q^fc = \frac{(p^{n}- 1)(q^{t_i} - 1)}{p^{s_i}- 1} + 1. \]

Now, $p^n - 1 = (p^s_i - 1)p^{n-s_i} +(p^{n-s_i} - 1)$ and so we deduce  

\[\begin{aligned} q^fc &= \frac{((p^s_i - 1)p^{n-s_i} +(p^{n-s_i} - 1))(q^{t_i} - 1)}{p^{s_i}- 1} + 1 
\\ &= p^{n-s_i}q^{t_i}  + \frac{(p^{n-s_i} - 1)(q^{t_i} - 1)}{p^{s_i}- 1} + 1 - p^{n-s_i}
\\ &= p^{n-s_i}q^{t_i} - \frac{(p^{s_i}-q^{t_i})(p^{n-s_i}-1)}{p^{s_i} -1}.
\end{aligned} \]


By Claim 4 we have $|Z(H)| > |Z(G)|$, so by Equation (\ref{eq3}) we conclude that $p^{s_i} > q^{t_i}$. Thus $\frac{(p^{s_i}-q^{t_i})(p^{n-s_i}-1)}{p^{s_i} -1} > 0$ and $q^fc < p^{n-s_i}q^{t_i}$. Combining this with (\ref{last}) we get
\[ p^{s_i}q^{t_i} \leq  q^fc < p^{n-s_i}q^{t_i}.\]
Therefore $p^{s_i} < p^{n-s_i}$, implying $n > 2s_i$.
\end{description}

Now, to prove parts (3) and (4) of the statement, recall that for every $i\geq 1$ we have $m \neq s_i$ and Claims 4 and 5 give $n > 2m$ and $n > 2s_i$. Thus we conclude that $n >4$ and $s_i \neq n-1 \neq m$, implying that the maximal subgroups of $G$ are not abelian. In particular, by Lemma \ref{lem:p-groups}(2) we conclude that if $P$ has nilpotency class greater than $2$ then $C_P(Z_2(P))$ is not a maximal subgroup of $P$. Hence $P$ does not have maximal nilpotency class.



\end{proof}

\begin{proof}[Proof of Theorem \ref{main}] 
By Lemma \ref{solvable} the group $H$ is a finite solvable AC-group and so it is of one of the types described in Theorem \ref{classification}.
Theorem \ref{main} is now a direct consequence of Lemmas \ref{type1} and \ref{type2.4} and of Theorem \ref{exception}.
\end{proof}

\section{Acknowledgements}
The authors are members of the ``National Group for Algebraic and Geometric Structures, and their
Applications'' (GNSAGA - INdAM).

\bibliographystyle{amsplain}
\bibliography{books}

\providecommand{\bysame}{\leavevmode\hbox to3em{\hrulefill}\thinspace}
\providecommand{\MR}{\relax\ifhmode\unskip\space\fi MR }
\providecommand{\MRhref}[2]{%
  \href{http://www.ams.org/mathscinet-getitem?mr=#1}{#2}
}
\providecommand{\href}[2]{#2}
\begin{thebibliography}{10}

\bibitem{non.comm.p.groups}
A.~Abdollahi, S.~Akbari, H.~Dorbidi, and H.~Shahverdi, \emph{Commutativity
  pattern of finite non-abelian p-groups determine their orders}, Comm. Algebra
  \textbf{41} (2013), no.~2, 451--461.

\bibitem{AAM}
A.~Abdollahi, S.~Akbari, and H.~R. Maimani, \emph{Non-commuting graph of a
  group}, J. Algebra \textbf{298} (2006), 468--492.

\bibitem{non-comm.nilp}
A.~Abdollahi and H.~Shahverdi, \emph{Non-commuting graphs of nilpotent groups},
  Comm. Algebra \textbf{42} (2014), no.~9, 3944--3949. \MR{3200070}

\bibitem{Cameron}
Peter~J. Cameron, \emph{Graphs defined on groups}, Int. J. of Group Theory
  \textbf{11} (2022), no.~2, 53--107.

\bibitem{dar}
R.~Darafsheh, \emph{Groups with the same non-commuting graph}, Discrete Appl.
  Math. \textbf{157} (2009), 833--837.

\bibitem{ito}
N.~It\^{o}, \emph{On finite groups with given conjugate type i}, Nagoya Math.
  J. \textbf{6} (1953), 17--28.

\bibitem{KM}
V.~D. Mazurov and E.~I. Khukhro, \emph{Unsolved problems in group theory. the
  kourovka notebook},  (2022), no.~20.

\bibitem{counter.non.comm}
A.~R. Moghaddamfar, \emph{About noncommuting graphs}, Siberian Math. J.
  \textbf{47} (2006), 911--914.

\bibitem{RockePhD}
D.~M. Rocke, \emph{Groups with abelian centrlizers}, ProQuest LLC, Ann Arbor,
  MI, 1972, Thesis (Ph.D.)--University of Illinois at Chicago. \MR{2622793}

\bibitem{Rocke}
\bysame, \emph{{$p$}-groups with abelian centralizers}, Proc. London Math. Soc.
  (3) \textbf{30} (1975), 55--75. \MR{360813}

\bibitem{Schmidt1970}
R.~Schmidt, \emph{Zentralisatorverb\"{a}nde endlicher gruppen}, Rendiconti del
  Semin. Mat. dell Univ. di Padova \textbf{44} (1970), 97--131 (ger).

\bibitem{SW}
R.~M. Solomon and A.~J. Woldar, \emph{Simple groups are characterized by their
  non-commuting graphs}, J. Group Theory \textbf{16} (2013), 793 -- 824.

\end{thebibliography}
\end{document}